\theoremstyle{plain}
\newtheorem{theorem}{Theorem}[section]
\newtheorem{proposition}[theorem]{Proposition}
\newtheorem{corollary}[theorem]{Corollary}
\theoremstyle{definition}
\newtheorem{definition}[theorem]{Definition}
\newtheorem{remark}[theorem]{Remark}
\newtheorem{example}[theorem]{Example}
\numberwithin{equation}{section}
\newcommand\cA{{\mathcal A}}
\newcommand\cO{{\mathcal O}}
\newcommand\aff{\operatorname{aff}}
\newcommand\ant{\operatorname{ant}}
\newcommand\Ker{\operatorname{Ker}}
\newcommand\Spec{\operatorname{Spec}}
\title{The ring of regular functions of an algebraic monoid}
\author{Lex Renner and Alvaro Rittatore}
\thanks{The first named author was partially supported by a grant from NSERC.
The second named author was partially supported by grants from IMU/CDE,
NSERC and PDT/54-02 research project}
\begin{document}

\begin{abstract}
Let $M$ be an irreducible normal algebraic monoid with unit group $G$.
It is known that $G$ admits a Rosenlicht decomposition,
$G=G_{\ant}G_{\aff}\cong (G_{\ant}\times G_{\aff})/G_{\aff}\cap
  G_{\ant}$, where  $G_{\ant}$ is the maximal anti-affine subgroup of
$G$, and $G_{\aff}$ the maximal normal connected affine subgroup of
$G$. In this paper we show that this decomposition extends to a
decomposition
$M=G_{\ant}M_{\aff}\cong G_{\ant}*_{G_{\aff}\cap
  G_{\ant}}M_{\aff}$, where $M_{\aff}$ is the affine submonoid
$M_{\aff}=\overline{G_{\aff}}$. We then use this decomposition  to calculate
$\mathcal{O}(M)$ in terms of $\mathcal{O}(M_{\aff})$ and
$G_{\aff}, G_{\ant}\subset G$. In particular, we determine when $M$ is
an anti-affine monoid, that is $\mathcal{O}(M)=\Bbbk$.
\end{abstract}

\maketitle

\section{Introduction}

The theory of {\em affine} algebraic monoids has been
investigated extensively over the last thirty years.
See \cite{Pu88, Re05, So93} for different accounts of these
developments. More recently there has been some important progress on
the structure of non-affine algebraic monoids. By generalizing a
classical theorem of Chevalley, the authors of \cite{ritbr} prove that
any normal algebraic monoid is an extension of an affine algebraic
monoid by an abelian variety. This allows one to analyze the structure
of such monoids in terms of more basic objects: affine monoids, abelian
varieties and anti-affine algebraic groups.

To state our results we first introduce some notation. Let
$\Bbbk $ be an algebraically closed field. We work
with algebraic varieties $X$ over $\Bbbk $, that is, integral,
separated schemes over $\Bbbk $. An
algebraic group is assumed to be a smooth group scheme of finite type
over $\Bbbk $. If $X$ is an algebraic variety we denote by
$\mathcal{O}(X)$ the ring of regular functions on $X$.
If $X$ is an affine variety and
$I\subset \cO(X)$ is an ideal, we denote by $\mathcal
V(I)=\bigl\{x\in X\mathrel{:} f(x)=0 \ \forall \, f\in I\bigr\}$;  if
$Y\subset X$ is a subset, we denote by $\mathcal I(Z)=\bigr\{ f\in
\cO(X)\mathrel{:} f(y)=0\ \forall\, y\in Y\bigr\}$.
If $X$ is
irreducible we denote
by $\Bbbk (X)$ the field of rational functions on $X$. If $A$ is
any integral domain we denote by $[A]$ its quotient field. Hence, if
$X$ is an irreducible affine variety then $\Bbbk(X)=\bigl[\cO(X)\bigr]$.

Let $M$ be a connected, normal, \emph{algebraic monoid} with unit group
$G$ (see Definition \ref{def:algmon} below). The original motivation for this paper
was to investigate the following basic question, first posed by M. Brion.
\[
\text{``How does one describe $\cO(M)$, and when is it finitely generated?"}
\]
Although we do not answer this question completely, we obtain many
remarkable results about $\cO(M)$.

Let $M$ and $G$ be as above.
By the results of \cite{ritbr}, if $\alpha_G : G \to A$ is the
unique Albanese morphism of $G$ such that $\alpha_G(1_G)=0_A$ (note the
additive notation for $A$), then there exists a unique morphism
$\alpha_M : M \to A$
such that $\alpha_M|_{_G}=\alpha_G$. Furthermore, $\alpha$ is an
affine morphism,
and the scheme-theoretic fibers
of $\alpha$ are normal varieties. The fiber at $1\in A$ is $M_{\aff}$,
the unique irreducible, affine submonoid of $M$ with unit group
$G_{\aff}$, the kernel of $\alpha$. See Theorem \ref{thm:strucmonoid} below.

The purpose of this paper is three-fold. First we identify
$\mathcal{O}(M)$ in terms of the structure of $M$ and
$M_{\aff}$, see Theorem \ref{thm:crucial}. We then identify
conditions under which
$\bigl[\mathcal{O}(M)\bigr]=\bigl[\mathcal{O}(G)\bigr]$, see Theorem
\ref{thm:stablefracfield}. Finally, we determine the
conditions
under which $\mathcal{O}(M)=\Bbbk $ (that is when $M$ is an
\emph{anti-affine} algebraic monoid, Theorem
\ref{thm:TheTheorem}). In order to establish our results we define the
notion of a \emph{stable} algebraic monoid (see Definition
\ref{def:stable}).

To obtain our main results we make use of the generalized Chevalley
decomposition presented in
\cite{ritbr}, which states that, if $M$ is an
irreducible monoid then
\[
M\cong G*_{G_{\aff}}M_{\aff},
\]
where $G_{\aff}$ is the smallest affine algebraic group such that
$G/G_{\aff}$ is an abelian variety (see Theorem \ref{thm:strucmonoid} below).
This structural result allows us to present a
Rosenlicht decomposition $M=G_{\ant}*_{G_{\aff}\cap
G_{\ant}}M_{\aff}$ that generalizes the corresponding decomposition
$G=(G_{\ant}\times G_{\aff})/(G_{\aff}\cap G_{\ant})$ of $G$,
where $G_{\ant}$ is the largest \emph{anti-affine}
subgroup of $G$. See \cite{kn:brionantiaff} and Proposition
\ref{prop:resultsM} below.
We then use this decomposition (of $M$)
in Theorem \ref{thm:crucial} to calculate
$\mathcal{O}(M)$ in terms of $\mathcal O(M_{\aff})$
and $H=G_{\aff}\cap G_{\ant}$.

Next we identify
a set of central idempotents $e$ of $M$ such that
$\mathcal{O}(M)=\mathcal{O}(eM)$. But such an idempotent can be chosen
so that $eM$ is a {\em stable} monoid. Consequently, we reduce ourselves
to the study of stable monoids, thereby obtaining a characterization of
the algebraic monoids $M$ such that $\mathcal O(M)=\Bbbk$, the
\emph{anti-affine} algebraic monoids. See Theorem
\ref{thm:TheTheorem}.

Let $M$ be an anti-affine algebraic monoid and let $e\in E(M)$ be the minimum
idempotent of $M$. In Theorem \ref{thm:genalb} we show that the retraction
$\ell_e : M\to eM$, $\ell_e(m)=em$, is Serre's universal morphism from $M$ to a
commutative algebraic group (see \cite[Thm.~8]{Se58}).

We conclude the paper with Theorem \ref{thm:rosenmon}.
Here we show that there is an analogue of the Rosenlicht decomposition
for a large class of normal, algebraic monoids. In particular the fibre $\varphi^{-1}(1)$,
of the canonical map $\varphi : M\to Spec(\mathcal O(M))$, is an anti-affine monoid
which we identify explicitly in terms of the internal structure of $M$.

\medskip

{\sc Acknowledgements: }
This paper was written during a stay of the second author at the
University of Western Ontario. He would like to thank them for the kind
hospitality he received during his stay.


\section{Preliminaries}

In this section we assemble some of what is known about algebraic monoids
with nonlinear unit groups. These results are due to M. Brion and the second
named author \cite{Br06,ritbr,kn:brionlocal,Ri06}.

\begin{definition}
\label{def:algmon}
An {\em algebraic monoid}\/ is an algebraic variety $M$
together with a morphism  $m:M\times M\to M$ such that $m$ is an
associative product and there exists a neutral element $1\in M$.
The {\em unit group}\/ of $M$ is the group of invertible elements
\[
G(M)=\left\{ g\in M\mathrel{:} \exists\,
g^{-1}\,,\, gg^{-1}=g^{-1}g=1\right\}.
\]
We denote the \emph{set of idempotent elements} by $E(M)=\{e\in
M\mathrel{:} e^2=e\}$.
\end{definition}

It has been  proved that $G(M)$ is an algebraic group, open in $M$ (see
for example \cite{Ri98}). The  structure of  $M$ is significantly
influenced by the  structure of $G(M)$. For
instance, the $G(M)\times G(M)$ action by left and right
multiplication,  $(a,b)\cdot m=amb^{-1}$, has $G(M)$ as open orbit
and, if $G(M)$ is dense in $M$, an unique closed orbit, which is the
\emph{Kernel} of $M$,
i.e.~the minimum, closed, nonempty subset $I$ such that $MIM\subset I$.

Recall that  if $G$ is  algebraic group, then the Albanese
morphism  $p:G\to A(G)$ fits into an exact sequence
\begin{center}
\mbox{
\xymatrix{
1\ar@{->}[r]&G_{{\aff}}\ar@{->}[r]&
G\ar@{->}[r]^-p &A(G) \ar@{->}[r] & 0
}
}
\end{center}
where $G_{\aff}$ is a normal connected affine algebraic group (since the
group $\cA(G)$ is commutative, its law will be denoted
additively). Moreover, $G_{\aff}$ is the smallest affine algebraic
subgroup such that $G/G_{\aff}$ is an abelian variety. This structure
theorem is originally due to Chevalley, but now there is a modern proof
in \cite{Co02}. Recently it has been generalized from groups to monoids.
The following theorem is a summary of this development.

\begin{theorem}[Brion, Rittatore {\cite{Br06,ritbr,kn:brionlocal,Ri06}}]
\label{thm:strucmonoid}
Let $M$ be a normal irreducible algebraic monoid with unit group
$G$. Then $M$ admits a Chevalley decomposition:

\begin{center}
\mbox{
\xymatrix{
1\ar@{->}[r]&M_{{\aff}}=\overline{G_{{\aff}}}\ar@{->}[r]&
M=\overline{G}\ar@{->}[r]^-p
  &A(G) \ar@{->}[r] &0\\
1\ar@{->}[r]&G_{{\aff}}\ar@{->}[r]\ar@{^(->}[u]&
G\ar@{->}[r]_-p\ar@{^(->}[u] &A(G) \ar@{->}[r]\ar@{=}[u] & 0
}
}
\end{center}
where $p:M\to A(G)=G/G_{{\aff}}$ and $p|_{_G}:G\to A(G)$ are the Albanese
morphisms  of $M$ and $G$ respectively, and $M_{\aff}$ is an
affine algebraic monoid.
\qed
\end{theorem}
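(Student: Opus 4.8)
The plan is to reduce the theorem to two substantive inputs and to obtain everything else formally. The inputs are: \textbf{(A)} the Albanese morphism $\alpha_G\colon G\to A(G)=G/G_{\aff}$ extends to a morphism $p\colon M\to A(G)$ — equivalently, the open immersion $G\hookrightarrow M$ induces an isomorphism on Albanese varieties; and \textbf{(B)} the submonoid $\overline{G_{\aff}}\subset M$ is affine. So I would first derive the statement from (A) and (B), and then indicate how to establish them.

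Granting (A): two morphisms out of $M$, or out of $M\times M$, that agree on the dense subset $G$ (or on $G\times G$) coincide, and applying this repeatedly shows that $p$ is a homomorphism of monoids onto the group $(A(G),+)$, that it is equivariant for the left $G$-action on $M$ and the transitive translation action of $G$ on $A(G)=G/G_{\aff}$, and that it is the Albanese morphism of $M$ (any morphism $f\colon M\to B$ to an abelian variety with $f(1)=0$ restricts on $G$ to a map factoring through $\alpha_G$, and the induced $A(G)\to B$ composed with $p$ then agrees with $f$ on the dense set $G$, hence everywhere). Set $M_{\aff}:=p^{-1}(0)$, a closed submonoid with $G\cap M_{\aff}=\alpha_G^{-1}(0)=G_{\aff}$ open in it. The generic fibre of $p$ contains the generic fibre of $\alpha_G$ — a torsor under the smooth connected group $G_{\aff}$, hence geometrically integral of dimension $\dim G_{\aff}$ — as a dense open subscheme, so it is itself geometrically irreducible of that dimension; since geometric irreducibility of the fibres is a constructible condition on the base and the translates of any dense open of $A(G)$ cover $A(G)$, every fibre of $p$ is geometrically irreducible of dimension $\dim G_{\aff}$. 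In particular $M_{\aff}$ is irreducible, of dimension $\dim G_{\aff}$, containing $G_{\aff}$ as a dense open of the same dimension, so $M_{\aff}=\overline{G_{\aff}}$. Because $p$ is $G$-equivariant over the homogeneous space $G/G_{\aff}$, multiplication induces a bijective $G$-equivariant morphism $G*_{G_{\aff}}M_{\aff}=(G\times M_{\aff})/G_{\aff}\to M$ that restricts to the identity over $G$; being birational and quasi-finite onto the normal variety $M$, it is an isomorphism by Zariski's main theorem. Hence $p$ is, \'etale-locally on $A(G)$, the projection $A(G)\times M_{\aff}\to A(G)$ — in particular faithfully flat with geometrically integral fibres — and $M_{\aff}$ inherits normality from $M$. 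Now $M_{\aff}$ is a normal irreducible algebraic monoid whose unit group $G_{\aff}$ is affine, so by the criterion \cite{Ri98} that a normal irreducible algebraic monoid is affine exactly when its unit group is (this is all that (B) requires), $M_{\aff}$ is affine; commutativity of the displayed diagram is clear by construction.

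It remains to establish (A). The naive argument — a rational map to an abelian variety is a morphism — works from the \emph{smooth} locus of $M$, since abelian varieties contain no rational curves, but $M$ is only normal and this must be supplemented: either one notes that normal algebraic monoids have mild singularities (rational singularities in characteristic zero, with the Frobenius-splitting analogue otherwise), so that the rational map from $M$ to $A(G)$, which lifts to a genuine morphism on a resolution, descends back to $M$ because the fibres of the resolution have vanishing first cohomology of the structure sheaf and hence trivial Albanese; or, following \cite{ritbr,kn:brionlocal}, one uses the local structure of $M$ along the affine submonoid to cover $M$ by $G\times G$-translates of a common affine open on each of which the quotient map onto $G/G_{\aff}$ is manifest, and glues. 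This step, (A) — identifying the Albanese variety of $M$ with that of its unit group — is the one I expect to be the main obstacle; once it is secured, the fibre-bundle decomposition, the identification $M_{\aff}=\overline{G_{\aff}}$, and, via \cite{Ri98}, the affineness of $M_{\aff}$ all follow without essential difficulty.
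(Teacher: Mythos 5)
First, a point of reference: the paper does not prove this theorem at all --- it is stated without proof, as a summary of \cite{Br06,ritbr,kn:brionlocal,Ri06} --- so there is no internal argument to compare yours against. Judged on its own terms, the formal part of your reduction is sound: granting (A), the existence of $p\colon M\to A(G)$ extending $\alpha_G$, and (B), the affineness criterion of \cite{Ri06} (your citation \cite{Ri98} points to the wrong paper of Rittatore), the identification $p^{-1}(0)=\overline{G_{\aff}}$ via the constructibility-plus-translation argument, the bijectivity of $G*_{G_{\aff}}M_{\aff}\to M$, and the appeal to Zariski's main theorem are all correct. One ordering problem: you form the quotient $G*_{G_{\aff}}M_{\aff}$ before knowing that $M_{\aff}$ is quasi-projective, or even normal, which is what guarantees that the quotient exists; this is repaired by instead base-changing $p$ along $\alpha_G$, i.e.\ observing that $M\times_{A(G)}G\cong M_{\aff}\times G$ via $(m,g)\mapsto(g^{-1}m,g)$, which yields flatness of $p$, the fibre identification, and descent of normality to $M_{\aff}$ without ever invoking the associated bundle.

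The genuine gap is (A), which you correctly identify as the crux but do not establish. Of your two proposed routes, the first --- that normal algebraic monoids have rational singularities, so the Albanese map descends from a resolution --- is not an available input: no such statement about the singularities of a general normal monoid is known prior to the structure theorem (where true, it is a consequence of it), so that route is circular as stated. The second route is a pointer to \cite{ritbr,kn:brionlocal}, i.e.\ to the result being proved. It is worth noting that the argument of \cite{ritbr} runs in the opposite order to yours and thereby avoids making the Albanese extension the bottleneck: in outline, one first proves that $\overline{G_{\aff}}$ is affine by applying \cite{Ri06} (which requires no normality hypothesis, via normalization and descent of affineness along finite surjections), then shows that $G\overline{G_{\aff}}$ is closed in $M$ by mapping $G*_{G_{\aff}}\overline{G_{\aff}}$ to $M\times A(G)$ via $[g,x]\mapsto\bigl(gx,\alpha_G(g)\bigr)$ and using completeness of $A(G)$ to project down; since $G\overline{G_{\aff}}$ contains the dense open $G$, it is all of $M$, and $p$ is then read off as the induced projection $M\cong G*_{G_{\aff}}\overline{G_{\aff}}\to A(G)$. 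In that order the extension of the Albanese morphism is a corollary rather than a prerequisite.
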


\begin{definition}
Let $ G$ be an algebraic group, and let a closed subgroup $H\subset G$
act on
an algebraic variety $X$. The {\em induced space}\/ $G*_HX$
is defined as the geometric quotient
of $G\times X$ under the $H$-action $h\cdot(g,x)=(gh^{-1},h\cdot
x)$.

Under mild conditions on $X$ (e.g.~$X$ normal and covered by
quasi-projective $H$-stable open subsets), this quotient exists. Clearly,
$G*_HX$ is a $G$-variety, for the action induced by $a\cdot
(g,x)=(ag,x)$. We will denote the class of $(g,x)$ in $G\times X$ by
$[g,x]\in G*_HX$. The fundamental properties of
$G*_HX$ were established by Bialynicki-Birula in \cite{kn:bb-induced}.
See also \cite{Ti06}.
\end{definition}

\begin{remark}
Let $G$ be an algebraic group, and $H\subset G$ a closed subgroup acting over an
algebraic variety $X$. Then $\pi:G*_HX\to G/H$, which is induced by
$(g,x)\mapsto gH$, is a fiber bundle over $G/H$, with fiber
isomorphic to $X$.
\end{remark}

\begin{theorem}[{Brion, Rittatore, \cite{Br06,ritbr}}]
\label{thm:brionrittabel}
Let $M$ be a normal algebraic monoid and let $Z^0$ be the connected
center of $G$. Then $A(G)\cong
Z^0/(Z_0\cap
G_{{\aff}})$ and
\[
M=GM_{{\aff}}=Z^0M_{{\aff}}\cong G*_{G_{{\aff}}}M_{{\aff}}\cong  Z^0*_{Z_0\cap
  G_{{\aff}}}M_{{\aff}}.
\]
\end{theorem}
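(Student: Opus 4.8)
The plan is to build everything on Theorem \ref{thm:strucmonoid}, which already gives the Chevalley decomposition $M=\overline G$ with affine kernel $M_{\aff}=\overline{G_{\aff}}$ and Albanese map $p:M\to A(G)$. First I would establish the isomorphism $A(G)\cong Z^0/(Z^0\cap G_{\aff})$ at the level of groups: since $A(G)=G/G_{\aff}$ is an abelian variety, the restriction $p|_{Z^0}:Z^0\to A(G)$ has image a closed subgroup whose quotient $A(G)/p(Z^0)$ is both affine (being a quotient of the affine group $G/Z^0 G_{\aff}$, as $G/G_{\aff}$ is generated by the images of any maximal torus, hence by $Z^0$ together with $G_{\aff}$) and an abelian variety, hence trivial; so $p|_{Z^0}$ is surjective with kernel $Z^0\cap G_{\aff}$. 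The key input here is that $G$ modulo $G_{\aff}$ is generated by the image of the connected center, which follows from the structure of linear groups together with $G/G_{\aff}$ being commutative.

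Next I would produce the induced-space descriptions. The isomorphism $M\cong G*_{G_{\aff}}M_{\aff}$ should come directly from the fiber-bundle structure of $p:M\to A(G)=G/G_{\aff}$: the map $G\times M_{\aff}\to M$, $(g,m)\mapsto gm$, is $G_{\aff}$-invariant for the action $h\cdot(g,m)=(gh^{-1},hm)$ (using that $M_{\aff}$ is stable under left and right multiplication by $G_{\aff}$, $G_{\aff}$ being a group inside $G$), and it descends to a morphism $G*_{G_{\aff}}M_{\aff}\to M$ which is $G$-equivariant, is an isomorphism over the open cell, and is a bundle map over $G/G_{\aff}$, hence an isomorphism by comparing fibers (each fiber of $p$ over $gG_{\aff}$ is $gM_{\aff}$, using normality to identify scheme-theoretic fibers as in Theorem \ref{thm:strucmonoid}). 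For $M=GM_{\aff}$ this is then just surjectivity of the above map, and $M=Z^0M_{\aff}$ follows because $G=Z^0G_{\aff}$ (the group-level statement just proved) and $G_{\aff}M_{\aff}=M_{\aff}$. Finally $Z^0*_{Z^0\cap G_{\aff}}M_{\aff}\cong M$ is obtained the same way, now using the bundle $p|:M\to Z^0/(Z^0\cap G_{\aff})\cong A(G)$ and the $(Z^0\cap G_{\aff})$-invariance of $(z,m)\mapsto zm$; equivalently, one checks the natural map $Z^0*_{Z^0\cap G_{\aff}}M_{\aff}\to G*_{G_{\aff}}M_{\aff}$ induced by the inclusion $Z^0\hookrightarrow G$ is an isomorphism, which reduces to the base isomorphism $Z^0/(Z^0\cap G_{\aff})\xrightarrow{\sim}G/G_{\aff}$ together with equality of fibers.

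The main obstacle I anticipate is not any single step but the careful bookkeeping of \emph{scheme-theoretic} versus set-theoretic fibers, and verifying that the candidate morphisms out of the induced spaces are genuinely isomorphisms of varieties rather than merely bijective: one must invoke normality of $M$ (and of the fibers, per Theorem \ref{thm:strucmonoid}) together with the fact that a $G$-equivariant bundle map over $G/G_{\aff}$ which restricts to an isomorphism on one fiber is globally an isomorphism. A secondary subtlety is checking that $M_{\aff}$ is stable under the relevant two-sided $G_{\aff}$-action and that $Z^0$ normalizes $M_{\aff}$ (so that the $Z^0$-induced space makes sense), both of which follow from $M_{\aff}=\overline{G_{\aff}}$ and $Z^0$ commuting with $G$, but deserve to be stated. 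Once these points are pinned down, the displayed chain of isomorphisms assembles formally.
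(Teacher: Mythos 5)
The paper itself offers no proof of Theorem \ref{thm:brionrittabel}; it is imported wholesale from \cite{Br06,ritbr}, so the only internal point of comparison is the paper's proof of the analogous Proposition \ref{prop:resultsM}. Your treatment of the induced-space isomorphisms matches that technique exactly and is essentially correct: descend $(g,m)\mapsto gm$ to $G*_{G_{\aff}}M_{\aff}\to M$, get surjectivity from the description of the fibers of the Albanese map $p$ (Theorem \ref{thm:strucmonoid}), get injectivity by pushing $b^{-1}a$ into $G_{\aff}$ (resp.\ $Z^0\cap G_{\aff}$) via $p$, and upgrade the resulting bijective, generically isomorphic morphism to an isomorphism using normality of $M$ and Zariski's main theorem. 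Your closing remarks about scheme-theoretic fibers and about $Z^0$ centralizing $M_{\aff}$ are the right things to worry about and are handled adequately.

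The genuine gap is in your first paragraph, where you prove that $Z^0\to A(G)$ is surjective, i.e.\ that $G=Z^0G_{\aff}$. The justification you give --- ``$G/G_{\aff}$ is generated by the images of any maximal torus'' --- is false: a maximal torus is affine, hence contained in $G_{\aff}$, so its image in the abelian variety $A(G)=G/G_{\aff}$ is trivial, while $A(G)$ is nontrivial in general. Moreover the affineness of $A(G)/p(Z^0)=G/(Z^0G_{\aff})$ is asserted circularly (you call it ``a quotient of the affine group $G/Z^0G_{\aff}$'', which is precisely the group whose affineness is at issue). What is actually needed at this point is Rosenlicht's theorem that $G/Z(G)$ is an affine algebraic group (proved by letting $G$ act by conjugation on the affine normal subgroup $G_{\aff}$, so that $G/C_G(G_{\aff})$ is linear, and then showing $C_G(G_{\aff})^0$ is central); granting that, $G/(Z^0G_{\aff})$ is a quotient of the affine group $G/Z^0$ and of the complete group $A(G)$, hence trivial, which is your intended conclusion. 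This is a classical result (\cite{Ro56}; see also \cite[Lemma~1.1]{kn:brionantiaff}) and may simply be cited, but it cannot be derived from ``the structure of linear groups'' plus commutativity of $G/G_{\aff}$ in the way you indicate; as written this step would fail.
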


\begin{definition}
If $M$ is an algebraic monoid with unit group, we define the \emph{center
of $M$} be
\[
\mathcal Z(M)=\bigl\{ z\in M\mathrel{:} zm=mz\ \forall\, m\in M\bigr\},
\]
the set of  central elements.
\end{definition}

It is clear that  $\mathcal Z(M)$ is a closed
submonoid of $M$, with unit group $G\bigl(\mathcal Z(M)\bigr)= \mathcal
Z(G)$, the center of $G$.  However, one should be aware that this monoid
is not necessarily connected.   Moreover, the following example shows
that the $\mathcal Z(G)$ is not necessarily dense in $\mathcal Z(M)$.

\begin{example}
\label{exam:center}
 Let $r,s\in \mathbb N$, $r\neq s$, and consider the affine
  algebraic monoid
\[
 N=\bigl\{\left( \begin{smallmatrix}
t^r & a\\
0& t^s
\end{smallmatrix}
\right)
\mathrel{:}
t,a\in \Bbbk
\bigr\}.
\]
Then $G(N)= \bigl\{\left( \begin{smallmatrix}
t^r & a\\
0& t^s
\end{smallmatrix}
\right)
\mathrel{:}
t\in \Bbbk^* \,,\ a\in \Bbbk
\bigr\}$. The center of $G(N)$ is the finite subgroup
$\mathcal Z(G)=\bigl\{\left( \begin{smallmatrix}
t^r & 0\\
0& t^s
\end{smallmatrix}
\right)
\mathrel{:}
t^r=t^s
\bigr\}$. The center of $N$ is $\mathcal Z(N)= \mathcal Z(G)\cup
\{0\}$. However $\overline{\mathcal Z(G)}\neq \mathcal Z(N)$.

In particular the zero matrix is a central idempotent which does not
belong to $\overline{\mathcal Z(G)}$.
\end{example}

In what follows  we collect some results about the \emph{Rosenlicht
decomposition} of an algebraic group $G$. This decomposition depicts $G$
as the product of two subgroups, one affine
and the other anti-affine. We refer the reader to \cite{Ro56},
\cite[Sec.~III.3.8]{DG70} and \cite{kn:brionantiaff} for proofs and
further results about this decomposition.

\begin{definition}
A connected  algebraic group $G$ is \emph{anti-affine} if
$\cO(G)=\Bbbk$.
\end{definition}

\begin{remark}
It is easy to see that any anti-affine group is commutative.
See for  example \cite[Lem.~1.1]{kn:brionantiaff}.
\end{remark}

\begin{theorem}[Rosenlicht decomposition]
Let $G$ be a connected algebraic group. Then $\cO(G)$ is a finitely
generated algebra, in such a way that $\Spec\bigl(\cO(G)\bigr)$ is an
affine algebraic group.

Let $\varphi_G:G\to    \Spec\bigl(\cO(G)\bigr)$ be the canonical
morphism (the \emph{affinization}). Then $G_{\ant}=\Ker(\varphi_G)$ is
a connected subgroup, contained in the center of $G$.
The subgroup $G_{\ant}$ is the largest anti-affine subgroup scheme of
$G$. Equivalently, $G_{\ant}$ is the smallest normal subgroup scheme of
$G$ such that $G/G_{\ant}$ is affine.

 Moreover,
 $G=G_{\aff}G_{\ant}\cong (G_{\aff}\times G_{\ant})/(G_{\aff}\cap
 G_{\ant})$, and $G_{\aff}\cap G_{\ant}$ contains $(G_{\ant})_{\aff}$
 as an algebraic group of finite index.
\qed
\end{theorem}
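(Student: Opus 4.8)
\emph{The affinization.} The plan is to first produce $G_{\ant}$ as the \emph{smallest} normal subgroup scheme with affine quotient, and only afterwards to recognize it as $\Ker(\varphi_G)$ and as the largest anti-affine subgroup. Let $\mathcal N$ be the set of normal subgroup schemes $N\subset G$ with $G/N$ affine; it contains $G$. If $N_1,N_2\in\mathcal N$, then the homomorphism $G\to G/N_1\times G/N_2$ has kernel $N_1\cap N_2$, so $G/(N_1\cap N_2)$ is a closed subscheme of an affine scheme and thus lies in $\mathcal N$. Since $G$ is noetherian, descending chains of closed subschemes stabilize, so $\mathcal N$ has a least element, which I call $G_{\ant}$. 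Then $G/G_{\ant}$ is an affine algebraic group, so $\cO(G/G_{\ant})$ is finitely generated. Next I would check that $\cO(G)=\cO(G/G_{\ant})$: any $f\in\cO(G)$ lies in a finite-dimensional translation-stable subspace $V\subset\cO(G)$ (the regular representation is locally finite, via $\cO(G\times G)=\cO(G)\otimes_\bk\cO(G)$), the homomorphism $G\to\GL(V)$ has affine image so its kernel lies in $\mathcal N$ and hence contains $G_{\ant}$, so $G_{\ant}$ fixes $V$ pointwise and $f$ is constant on $G_{\ant}$-cosets, that is, $f$ is pulled back from $G/G_{\ant}$. Consequently $\cO(G)$ is finitely generated, $\Spec\cO(G)=G/G_{\ant}$, the map $\varphi_G$ is the quotient homomorphism, and $\Ker(\varphi_G)=G_{\ant}$. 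Finally $G_{\ant}^0$ is normal in $G$ and $G/G_{\ant}^0\to G/G_{\ant}$ is finite, so $G/G_{\ant}^0$ is affine; minimality of $G_{\ant}$ then forces $G_{\ant}=G_{\ant}^0$, i.e. $G_{\ant}$ is connected.

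\emph{$G_{\ant}$ is the largest anti-affine subgroup.} Applying the above to $G_{\ant}$ itself gives $\Spec\cO(G_{\ant})=G_{\ant}/(G_{\ant})_{\ant}$. If this were nontrivial I would derive a contradiction: $(G_{\ant})_{\ant}$ is characteristic in $G_{\ant}$, hence normal in $G$, and $G/(G_{\ant})_{\ant}$ is an extension of the affine group $G/G_{\ant}$ by the affine group $G_{\ant}/(G_{\ant})_{\ant}$, hence itself affine, which violates the minimality of $G_{\ant}$. So $\cO(G_{\ant})=\bk$. Conversely, any anti-affine subgroup $K\subset G$ acts trivially on every finite-dimensional translation-submodule $V\subset\cO(G)$ (because $\cO(K)=\bk$ kills $K\to\GL(V)$), hence acts trivially on $\cO(G)$, hence lies in $\Ker(\varphi_G)=G_{\ant}$. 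Thus $G_{\ant}$ is at once the largest anti-affine subgroup and the smallest normal subgroup scheme with affine quotient.

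\emph{$G_{\ant}$ is central.} Since $G\to A(G)$ is a surjection onto a commutative group, $[G,G]\subset G_{\aff}$. For fixed $g\in G$, normality of $G_{\ant}$ makes $\kappa_g:G_{\ant}\to G_{\ant}$, $h\mapsto ghg^{-1}h^{-1}$, a morphism with image inside $[G,G]\cap G_{\ant}\subset G_{\aff}\cap G_{\ant}$, a closed subgroup of $G_{\aff}$ and so affine. As $G_{\ant}$ is anti-affine, any morphism from it to an affine variety is constant (such a morphism corresponds to a $\bk$-point of the target), so $\kappa_g\equiv\kappa_g(1_G)=1_G$; thus $g$ centralizes $G_{\ant}$, and since $g$ is arbitrary, $G_{\ant}\subset\mathcal Z(G)$.

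\emph{The decomposition.} By Theorem \ref{thm:strucmonoid} (Chevalley), $A(G)=G/G_{\aff}$ is an abelian variety. Centrality of $G_{\ant}$ makes $\mu:G_{\aff}\times G_{\ant}\to G$, $(a,b)\mapsto ab$, a homomorphism; its image $G_{\aff}G_{\ant}$ is a closed subgroup, and $G/(G_{\aff}G_{\ant})$ is simultaneously a quotient of the abelian variety $G/G_{\aff}$ and of the affine group $G/G_{\ant}$, hence both an abelian variety and affine, hence trivial: $G=G_{\aff}G_{\ant}$. Then $\mu$ is surjective with kernel $\{(c,c^{-1}):c\in G_{\aff}\cap G_{\ant}\}\cong G_{\aff}\cap G_{\ant}$, so $G\cong(G_{\aff}\times G_{\ant})/(G_{\aff}\cap G_{\ant})$. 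For the last assertion: $(G_{\ant})_{\aff}$ is affine and connected, so its image in the abelian variety $G/G_{\aff}$ is trivial, i.e. $(G_{\ant})_{\aff}\subset G_{\aff}\cap G_{\ant}$; and $(G_{\aff}\cap G_{\ant})/(G_{\ant})_{\aff}$ is a quotient of an affine group that also embeds in the abelian variety $G_{\ant}/(G_{\ant})_{\aff}$, hence is finite, so $(G_{\ant})_{\aff}$ has finite index in $G_{\aff}\cap G_{\ant}$. The delicate point is the first paragraph — showing that the affinization of the (possibly non-affine) group $G$ is still a group quotient $G/G_{\ant}$ and not something bigger; once that is secured, the rest is formal, relying only on local finiteness of the regular representation, the triviality of affine abelian varieties, and the constancy of morphisms from an anti-affine variety to an affine one.
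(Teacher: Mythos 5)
The paper states this theorem as known background, with a \qed and references to \cite{Ro56}, \cite[Sec.~III.3.8]{DG70} and \cite{kn:brionantiaff}, so there is no in-paper proof to compare against; your argument is correct and is essentially the standard one from those sources (construct $G_{\ant}$ as the least normal subgroup scheme with affine quotient, identify $\Spec\bigl(\cO(G)\bigr)$ with $G/G_{\ant}$ via local finiteness of the regular representation, and kill $G/(G_{\aff}G_{\ant})$ because it is both affine and complete). The only inputs you use beyond formal manipulation --- the K\"unneth identity $\cO(G\times G)=\cO(G)\otimes_\bk\cO(G)$ for non-affine varieties, existence of quotients by normal subgroup schemes, and affineness of extensions of affine groups by affine groups --- are genuine but standard, and you invoke them at the right places.
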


In loose terms, an anti-affine algebraic group $G$ is a non-split
extension of an
abelian variety by an affine, commutative algebraic group. See
\cite[Thm.~2.7]{kn:brionantiaff}. But in the case
$\operatorname{char}(\Bbbk)=p>0$ the situation
is considerably less complicated. Indeed we have the following
simplifying result.
See  \cite[Prop.~2.2]{kn:brionantiaff}.

\begin{proposition}[Brion \cite{kn:brionantiaff}]
\label{prop:antiaffchar}
Let $G$ be an anti-affine, connected algebraic group over an
algebraically closed field $\Bbbk$. If $\operatorname{char}(\Bbbk)=p>0$
then $G$ is a semi-abelian variety.
i.e.~$G$ is the extension of an abelian variety by an affine torus group.
\begin{center}
\mbox{
\xymatrix{
1\ar@{->}[r]&T\ar@{->}[r]&
G\ar@{->}[r]^-p &A(G) \ar@{->}[r] & 0.
}
}
\end{center}
\end{proposition}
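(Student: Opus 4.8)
The plan is to reduce to the case of an extension of an abelian variety by a unipotent group, and then to rule out the unipotent part using that, in characteristic $p$, such a group is annihilated by a power of $p$, whereas abelian varieties are $p$-divisible.

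First, since $G$ is anti-affine it is commutative (cf.\ the remark above), so $G_{\aff}$ is a connected commutative affine algebraic group and hence splits as $G_{\aff}=T\times U$, with $T$ a torus and $U$ a connected commutative unipotent group; the goal is to show $U=0$. I will use repeatedly the elementary observation that if $\varphi:G\to H$ is a surjective homomorphism of algebraic groups then $\varphi^{*}:\cO(H)\to\cO(G)$ is injective (as $\varphi$ is dominant), so every quotient of an anti-affine group is anti-affine. In particular $\overline G:=G/T$ is anti-affine, and it sits in an exact sequence $1\to U\to \overline G\to A\to 0$, where $A=G/G_{\aff}$ is an abelian variety. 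It therefore suffices to prove $\overline G=A$: reinspecting the sequence $1\to T\to G\to A\to 0$ then exhibits $G$ as semi-abelian.

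Next I would exploit the characteristic. Since $U$ is a connected commutative unipotent group over a field of characteristic $p$, it is killed by $[p^{N}]$ for some $N$ (it admits a composition series with $\mathbb G_a$-quotients, and $\mathbb G_a$ is killed by $[p]$). Hence $[p^{N}]:\overline G\to\overline G$ vanishes on $U$, so it factors through the projection $\overline G\to \overline G/U=A$, giving a homomorphism $\psi:A\to\overline G$ with image $\psi(A)=[p^{N}]\,\overline G$. The subgroup $\psi(A)$ is complete (being the image of an abelian variety) and connected, and since $p^{N}\cdot\overline G\subseteq\psi(A)$, the quotient $\overline G/\psi(A)$ is annihilated by $[p^{N}]$.

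Finally I would observe that an algebraic group annihilated by $[p^{N}]$ is affine: its Albanese quotient is an abelian variety $B$ with $[p^{N}]_{B}=0$, but $[p^{N}]_{B}$ is an isogeny (of degree $p^{2N\dim B}$), hence surjective, forcing $\dim B=0$. Thus $\overline G/\psi(A)$ is affine; being also anti-affine, it is trivial, so $\overline G=\psi(A)$ is a complete connected algebraic group, i.e.\ an abelian variety. Hence $U=0$ and $G_{\aff}=T$ is a torus, which is the assertion. The point requiring care is the reduction step: one must check that $G/T$ genuinely realizes $U$ as an extension of $A$ (that is, $T\cap U=0$ and $(G/T)/U\cong A$), and that all quotients involved are smooth varieties — automatic here since $G$, $T$ and $U$ are smooth. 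I expect the essential use of the hypothesis $\operatorname{char}(\Bbbk)=p>0$ to be exactly the combination of the annihilation of $U$ by a power of $p$ with the $p$-divisibility of abelian varieties; in characteristic zero vector extensions of abelian varieties can be anti-affine, and it is precisely this phenomenon that the argument shows cannot occur in positive characteristic.
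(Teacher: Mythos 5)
Your argument is correct, and it is essentially the standard proof. Note that the paper itself offers no proof of this proposition: it is quoted verbatim from Brion's \emph{Anti-affine algebraic groups} (Prop.~2.2 there), so there is nothing internal to compare against. Your route --- split $G_{\aff}=T\times U$ (valid since $G$ is commutative and $\bk$ is perfect), pass to the anti-affine quotient $G/T$, observe that $U$ is killed by $[p^N]$ so that $[p^N]$ factors through a homomorphism $\psi:A\to G/T$ whose image is a complete subgroup with affine, $p^N$-torsion, hence trivial, cokernel --- is in substance the same multiplication-by-$p^N$ argument Brion gives. The small points you flag are indeed the ones that need saying and you handle them correctly: every quotient of an anti-affine group is anti-affine because the pullback along a dominant morphism is injective on regular functions; $[p^N]$ is a homomorphism only because $G$ is commutative; and the final step uses that a connected affine subgroup of an abelian variety is trivial. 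One could quibble that ``$\overline G/\psi(A)$ is annihilated by $[p^N]$'' should be checked scheme-theoretically in characteristic $p$, but since $\overline G/\psi(A)$ is smooth (a quotient of the smooth $\overline G$) and $[p^N]$ vanishes on its $\bk$-points, this is immediate. No gaps.
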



The following result generalizes Rosenlicht's decomposition to the
case of algebraic monoids. It is essential for determining the ring of
regular functions on an algebraic monoid.

\begin{proposition}[Rosenlicht decomposition for $M$]
\label{prop:resultsM}
Let $M$ be a normal irreducible algebraic monoid with unit group $G$. Then
$M \cong G_{\ant}*_{G_{\aff}\cap G_{\ant}}M_{\aff}$ so that
\[
M = G_{\ant}M_{\aff} \cong G_{\ant}*_{G_{\aff}\cap G_{\ant}}M_{\aff}.
\]
\end{proposition}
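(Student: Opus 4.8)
The plan is to start from the Chevalley-type decomposition $M\cong G*_{G_{\aff}}M_{\aff}$ of Theorem \ref{thm:strucmonoid} (equivalently, the statement $M=GM_{\aff}$ of Theorem \ref{thm:brionrittabel}) and substitute the Rosenlicht decomposition $G=G_{\ant}G_{\aff}$ of the unit group. Since $G=G_{\ant}G_{\aff}$ and $M=GM_{\aff}$, and since $G_{\aff}\subset M_{\aff}$, we immediately get $M=GM_{\aff}=G_{\ant}G_{\aff}M_{\aff}=G_{\ant}M_{\aff}$, which is the set-theoretic (really, variety-theoretic) equality. This is the easy half.

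The substantive part is to upgrade this to the induced-space isomorphism $M\cong G_{\ant}*_{G_{\aff}\cap G_{\ant}}M_{\aff}$. First I would check that $H:=G_{\aff}\cap G_{\ant}$ acts on $M_{\aff}$ (by left, or two-sided, translation): indeed $G_{\ant}$ is central in $G$ by the Rosenlicht theorem, hence $H\subset G_{\ant}$ is central in $G$, so $H$ normalizes $G_{\aff}$ and its closure $M_{\aff}=\overline{G_{\aff}}$; thus $H$ acts on $M_{\aff}$, and $M_{\aff}$ is normal and covered by quasi-projective $H$-stable opens (it is affine), so the induced space $G_{\ant}*_H M_{\aff}$ exists. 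Then I would write down the natural morphism
\[
\psi: G_{\ant}\times M_{\aff}\longrightarrow M,\qquad (g,x)\longmapsto gx,
\]
observe that $\psi(gh^{-1},hx)=gh^{-1}hx=gx=\psi(g,x)$ for $h\in H$ (using centrality of $h$), so $\psi$ descends to $\bar\psi: G_{\ant}*_H M_{\aff}\to M$. Surjectivity of $\bar\psi$ is exactly the equality $M=G_{\ant}M_{\aff}$ just established. For injectivity/isomorphism, the cleanest route is to compare with the known isomorphism $M\cong G*_{G_{\aff}}M_{\aff}$: there is an obvious $G_{\ant}$-equivariant morphism $G_{\ant}*_H M_{\aff}\to G*_{G_{\aff}}M_{\aff}$, $[g,x]\mapsto[g,x]$, and I would show it is an isomorphism by exhibiting an inverse using $G=G_{\ant}G_{\aff}$: given $[g,x]$ with $g=g_{\ant}g_{\aff}$, send it to $[g_{\ant},g_{\aff}x]$, and check this is well-defined (independent of the factorization, which differs by an element of $G_{\aff}\cap G_{\ant}=H$, and independent of the $G_{\aff}$-class representative). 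Both composites are then seen to be the identity.

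The main obstacle I anticipate is the well-definedness bookkeeping in that last step: one must track two quotient relations simultaneously (the $H$-action defining $G_{\ant}*_H M_{\aff}$ and the $G_{\aff}$-action defining $G*_{G_{\aff}}M_{\aff}$) and verify that the ambiguity in writing $g=g_{\ant}g_{\aff}$ — which is precisely an element of $H$ — is absorbed correctly by the $H$-action on $M_{\aff}$. This is where centrality of $G_{\ant}$ (hence of $H$) in $G$ is used crucially, since it guarantees $h\cdot x = hx = xh$ makes the translation unambiguous and compatible on both sides. Once this is in place, since both sides are normal varieties and $\bar\psi$ is a bijective morphism that is an isomorphism onto its image on the open dense orbit $G$ (where it restricts to the group-level Rosenlicht isomorphism $G\cong G_{\ant}*_H G_{\aff}$), normality of $M$ together with Zariski's main theorem gives that $\bar\psi$ is an isomorphism; alternatively the explicit inverse constructed above finishes the argument directly without invoking normality.
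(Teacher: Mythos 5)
Your proof is correct and follows essentially the same route as the paper: establish $M=G_{\ant}M_{\aff}$ from $G=G_{\ant}G_{\aff}$ and $M=GM_{\aff}$, descend the multiplication map to a surjection $G_{\ant}*_{G_{\aff}\cap G_{\ant}}M_{\aff}\to M$, and upgrade it to an isomorphism. The only difference is at the last step: rather than building an explicit inverse, the paper checks injectivity directly (if $am=bn$ with $a,b\in G_{\ant}$ and $m,n\in M_{\aff}$, then $b^{-1}a\in G_{\aff}\cap G_{\ant}$ by the isomorphism $M\cong G*_{G_{\aff}}M_{\aff}$) and then concludes via Zariski's main theorem using normality of $M$ --- precisely the alternative you mention in your closing paragraph.
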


\begin{proof}
Since $G=G_{\ant}G_{\aff}$, it follows that $M=GM_{\aff}=G_{\ant}M_{\aff}$.
The morphism $G_{\ant}\times M_{\aff}\to M$ $(g,m)\mapsto gm$ induces a
surjective morphism of algebraic monoids
  $\varphi: G_{\ant}*_{G_{\aff}\cap G_{\ant}} M_{\aff}\to M$,
$\varphi\bigl([g,m]\bigr)=gm$.  If $a,b\in G_{\ant} $ and $m,n\in M_{\aff}$
are such that $am=bn$, then $b^{-1}am=n$. It follows from Theorem
\ref{thm:brionrittabel} that  $b^{-1}a\in
G_{\aff}\cap G_{\ant}$, and thus $[a,m]=[b,n]$;
that is, $\varphi$ is injective. Since
$\varphi|_{_G}:G=G*_{G_{\aff}}G_{\aff}\to G$ is an isomorphism and that
$M$ is normal, it follows from Zariski's main Theorem that
$\varphi $ is an isomorphism.
\end{proof}

\begin{corollary}
Let $M$ be an irreducible, normal, algebraic monoid such that
$M_{\aff}$ is a monoid with  zero
$0$. Then $0M$ is the Kernel of $M$. Moreover, $0
M$ is an algebraic group, and
\[
0M=0G=0G_{\ant}\cong
G_{\ant}/(G_{\aff}\cap G_{\ant}).
\]
\end{corollary}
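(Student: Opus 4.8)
The plan is to exploit the Rosenlicht decomposition for $M$ just proved in Proposition \ref{prop:resultsM}, together with the standard structure of the Kernel. First I would recall that since $M_{\aff}$ has a zero $0$, the subset $0M_{\aff}$ is the minimal ideal of $M_{\aff}$; in fact $0M_{\aff}=\{0\}$ because $0\cdot m = 0$ for every $m\in M_{\aff}$ (multiply $0=0\cdot 0$ on the right and use associativity together with the fact that $0$ is absorbing). Thus $0$ is a central idempotent of $M_{\aff}$, and since $G_{\aff}\cap G_{\ant}\subset \mathcal Z(G)$ acts on $M_{\aff}$, one checks that $0$ is fixed by this action. From the isomorphism $M\cong G_{\ant}*_{G_{\aff}\cap G_{\ant}}M_{\aff}$ it then follows that $0M=\{[g,0]\mathrel{:}g\in G_{\ant}\}$, which is exactly the image of $G_{\ant}$ under $g\mapsto [g,0]$, giving the identification with $G_{\ant}/(G_{\aff}\cap G_{\ant})$ once we verify that $[g,0]=[g',0]$ iff $g^{-1}g'\in G_{\aff}\cap G_{\ant}$.

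Next I would verify that $0M$ is the Kernel of $M$. Since $0$ is a central idempotent, $0M=M0M$ is a two-sided ideal. It is closed because it is the continuous image of $G_{\ant}$, which is complete modulo its affine part — more carefully, $0M$ is the image of the proper-over-$A(G)$ fiber structure, but the cleanest argument is that $0M\cong G_{\ant}/(G_{\aff}\cap G_{\ant})$ is a group, hence consists of a single $\mathcal Z(G)$-orbit under the two-sided action, and any ideal contained in $0M$ must meet this orbit, forcing equality with $0M$ by $G\times G$-equivariance and density of $G$ in $M$. That $0M$ is a group follows since every element $[g,0]$ has inverse $[g^{-1},0]$, using that $0$ is idempotent so that $[g,0][g^{-1},0]=[1,0]$ serves as identity of $0M$.

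For the chain of equalities $0M=0G=0G_{\ant}$: the inclusion $0G_{\ant}\subset 0G\subset 0M$ is clear, and $0M=0GM_{\aff}=0G\cdot 0M_{\aff}=0G$ (using $0M_{\aff}=\{0\}$ and $0\in G\cdot 0$ trivially, plus $M=GM_{\aff}$); similarly $0G=0G_{\ant}G_{\aff}$, and since $0$ kills the affine part — because $0G_{\aff}\subset 0M_{\aff}=\{0\}$ after noting $\overline{0G_{\aff}}\subset \overline{0M_{\aff}}$... — one gets $0G_{\aff}=\{0\}$ directly from $G_{\aff}\subset M_{\aff}$ and $0$ absorbing in $M_{\aff}$, hence $0G=0G_{\ant}$. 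Finally the explicit group isomorphism: the orbit map $G_{\ant}\to 0M$, $g\mapsto 0g=[g,0]$, is a surjective homomorphism of algebraic groups with kernel $\{g\in G_{\ant}\mathrel{:}[g,0]=[1,0]\}=G_{\aff}\cap G_{\ant}$, so $0M\cong G_{\ant}/(G_{\aff}\cap G_{\ant})$.

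The main obstacle I anticipate is the bookkeeping in identifying $0M$ precisely with the set $\{[g,0]\}$ inside $G_{\ant}*_{G_{\aff}\cap G_{\ant}}M_{\aff}$ and making rigorous that this is a \emph{closed} subset which is genuinely the Kernel (rather than merely \emph{a} closed ideal); this requires knowing that the Kernel of $M$ is unique and is the unique closed $G\times G$-orbit, which is available from the discussion following Definition \ref{def:algmon} since $G$ is dense in $M$. Once that uniqueness is invoked, showing $0M$ is a single such orbit — equivalently, that $0M$ is a group acted on transitively — closes the argument with no further difficulty.
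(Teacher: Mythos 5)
Your argument is correct and follows essentially the same route as the paper: both use the decomposition $M\cong G_{\ant}*_{G_{\aff}\cap G_{\ant}}M_{\aff}$ of Proposition \ref{prop:resultsM} to identify $0M=0G_{\ant}$ as the closed image of the orbit $G_{\ant}\times\{0\}$, recognize it as the Kernel via the uniqueness of the closed $G\times G$-orbit, and compute the kernel of $g\mapsto 0g$ to obtain $G_{\ant}/(G_{\aff}\cap G_{\ant})$. The only point to tighten is the final step from a surjective homomorphism with set-theoretic kernel $G_{\aff}\cap G_{\ant}$ to the stated isomorphism, which in positive characteristic requires separability (as the paper notes) or, equivalently, the observation that the geometric quotient restricts to a geometric quotient on the closed $(G_{\aff}\cap G_{\ant})$-stable subset $G_{\ant}\times\{0\}$.
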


\begin{proof}
By Proposition \ref{prop:resultsM}, we have that
$0M=0G_{\ant}M_{\aff}=0G_{\ant}$. In particular,
$0M$ is the image under the geometric quotient $G_{\ant}\times
M_{\aff}\to M=G_{\ant}*_{G_{\aff}\cap G_{\ant}}M_{\aff}$ of the closed
$G_{\ant}$-orbit $G_{\ant}\times \{0\}$, and hence is closed in
$M$. Since $0$ is a central idempotent of $M$,  it follows that
$0M$ is the unique $(G\times G)$-closed orbit of $M$. Thus, by
\cite[Theorem 1]{Ri98}, $0M$ is the Kernel of $M$.

Since $0$ is central, it follows that
$0M=0G_{\ant}$ is an algebraic group.
Consider the
multiplication morphism
$\ell:G_{\ant}\to 0G_{\ant}$, $\ell(g)= 0g$. Then
\[
\ell^{-1}(0)=
\{g\in G_{\ant}\mathrel{:} 0g=0\}= (G_{\ant})_{0}.
\]

It follows that $g\in \ell^{-1}(0)$ if and only if
$[g,0]=[1,0]\in  G_{\ant}*_{G_{\aff}\cap
  G_{\ant}}M_{\aff}$; that is, if and only if $g\in G_{\aff}\cap
  G_{\ant}$. Since $\ell$ is a separable morphism, it follows that
  $0G_{\ant}\cong  (G_{\aff}\cap G_{\ant})$.
\end{proof}

\section{The algebra of regular functions of $M$}

The following Theorem is the key to understanding the ring of
regular functions on an algebraic monoid.

\begin{theorem}
\label{thm:crucial}
Let $M$ be a normal algebraic monoid. Then $\cO(M)\cong
\cO(M_{\aff})^{G_{\aff}\cap G_{\ant}}$.
\end{theorem}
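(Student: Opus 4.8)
The plan is to use the Rosenlicht decomposition $M\cong G_{\ant}*_{H}M_{\aff}$ with $H=G_{\aff}\cap G_{\ant}$ established in Proposition \ref{prop:resultsM}, and to compute $\cO(M)$ as the ring of global functions on the induced space $G_{\ant}*_H M_{\aff}$. A regular function on $M$ pulls back, via the quotient map $q:G_{\ant}\times M_{\aff}\to M$, to a regular function $F$ on $G_{\ant}\times M_{\aff}$ that is $H$-invariant for the twisted action $h\cdot(g,m)=(gh^{-1},hm)$; conversely every such invariant function descends to $M$. So the first step is to establish the identification $\cO(M)\cong \cO(G_{\ant}\times M_{\aff})^{H}$, using that $q$ is a geometric quotient (an $H$-torsor, in fact) so that $q^\sharp$ identifies $\cO(M)$ with the subring of $H$-invariants.

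Next I would compute $\cO(G_{\ant}\times M_{\aff})$. Since $M_{\aff}=\overline{G_{\aff}}$ is affine, $\cO(G_{\ant}\times M_{\aff})\cong \cO(G_{\ant})\otimes_{\bk}\cO(M_{\aff})$. But $G_{\ant}$ is anti-affine, so $\cO(G_{\ant})=\bk$, giving $\cO(G_{\ant}\times M_{\aff})\cong\cO(M_{\aff})$ via pullback along the second projection. The one subtlety here is whether $\cO$ commutes with this product when one factor is not affine; this follows because $M_{\aff}$ is affine (so $G_{\ant}\times M_{\aff}\to G_{\ant}$ is an affine morphism and one can push forward and take global sections, using $\cO(G_{\ant})=\bk$), or alternatively from the Künneth-type statement for $\cO$ of a product with an affine factor. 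Under this identification, a function $F(g,m)=f(m)$ is $H$-invariant for the twisted action precisely when $f(hm)=f(m)$ for all $h\in H$, i.e.\ $f\in\cO(M_{\aff})^{H}$, where $H$ acts on $M_{\aff}$ by left multiplication. Chaining the two identifications yields $\cO(M)\cong\cO(M_{\aff})^{G_{\aff}\cap G_{\ant}}$.

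The main obstacle is the first step: justifying that $\cO(M)$ really is the ring of $H$-invariant global functions on $G_{\ant}\times M_{\aff}$, i.e.\ that pulling back along the geometric quotient $q$ is an isomorphism onto invariants. This requires knowing that $q$ is (at least locally in the fppf or Zariski topology) an $H$-torsor, so that $q_*\cO_{G_{\ant}\times M_{\aff}}$ has $(q_*\cO)^{H}=\cO_M$; one invokes here the Bialynicki-Birula theory of induced spaces cited in the paper and the fact that $G_{\ant}\times M_{\aff}\to M$ is faithfully flat with the stated fibers. A second, minor point to watch is separability/reducedness of the invariants in positive characteristic, but since $q$ is a geometric quotient by a smooth group acting freely, $\cO_M=(q_*\cO)^{H}$ holds scheme-theoretically and no extra care is needed. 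Once the torsor description is in hand, everything else is the elementary computation above using $\cO(G_{\ant})=\bk$.
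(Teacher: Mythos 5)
Your proposal is correct and follows essentially the same route as the paper: decompose $M\cong G_{\ant}*_{G_{\aff}\cap G_{\ant}}M_{\aff}$ via Proposition \ref{prop:resultsM}, identify $\cO(M)$ with the invariants $\bigl(\cO(G_{\ant})\otimes\cO(M_{\aff})\bigr)^{G_{\aff}\cap G_{\ant}}$ through the geometric quotient, and conclude using $\cO(G_{\ant})=\Bbbk$. The only difference is that you spell out the descent and K\"unneth justifications that the paper leaves implicit, which is a welcome addition rather than a deviation.
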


\begin{proof}
By Proposition \ref{prop:resultsM}, it follows that $M\cong
G_{\ant}*_{G_{\aff}\cap G_{\ant}} M_{\aff}$, and hence $G_{\ant}\times
M_{\aff} \to M$ is a geometric
quotient. Thus
\[
\begin{split}
\cO(M)= & \ \cO\bigl(G_{\ant}\times M_{\aff} \bigr)^{G_{\ant}\cap
  G_{\aff}}= \\
& \ \bigl( \cO(G_{\ant})\otimes \cO(M_{\aff})\bigr)^{G_{\ant}\cap
  G_{\aff}}=\\
& \ \cO(M_{\aff})^{G_{\ant}\cap
  G_{\aff}} ,
\end{split}
\]
where for the last equality we used that $\cO(G_{\ant})=\Bbbk$.
\end{proof}

\begin{corollary}
Assume that $\operatorname{char}(\Bbbk)=p>0$. Then $\cO(M)$ is a
finitely generated algebra.
\end{corollary}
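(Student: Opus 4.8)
The plan is to combine Theorem \ref{thm:crucial} with Proposition \ref{prop:antiaffchar} in order to reduce the finite generation of $\cO(M)$ to a statement about invariants of a finite-dimensional torus action on an affine monoid. By Theorem \ref{thm:crucial} we have $\cO(M)\cong\cO(M_{\aff})^{H}$ where $H=G_{\aff}\cap G_{\ant}$. Since $M_{\aff}$ is an \emph{affine} algebraic monoid, $\cO(M_{\aff})$ is a finitely generated $\Bbbk$-algebra, so the whole problem is to show that the ring of invariants $\cO(M_{\aff})^{H}$ is again finitely generated.

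The key input is the hypothesis $\operatorname{char}(\Bbbk)=p>0$. By Proposition \ref{prop:antiaffchar}, the anti-affine group $G_{\ant}$ is then a semi-abelian variety, so it is an extension of an abelian variety by an affine torus $T$; in particular every affine algebraic subgroup of $G_{\ant}$ is diagonalizable (a subgroup of a torus), since $G_{\ant}$ has no nontrivial affine subgroup other than those contained in $T$. Now $H=G_{\aff}\cap G_{\ant}$ is an affine subgroup of $G_{\ant}$, hence $H$ is diagonalizable: $H\cong T'\times F$ with $T'$ a torus and $F$ a finite abelian group (of order possibly divisible by $p$). The first step is therefore to record that $H$ is linearly reductive — diagonalizable groups are linearly reductive in every characteristic, since their rational representations decompose as direct sums of characters. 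Here I would be careful to check that $H$, although possibly non-connected and possibly with $p$ dividing $|F|$, is still linearly reductive precisely because it is diagonalizable (this uses that $\Bbbk$ is algebraically closed, so the character group functor is an anti-equivalence onto finitely generated abelian groups and representations split accordingly).

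Once $H$ is known to be linearly reductive, the finite generation of $\cO(M_{\aff})^{H}$ follows from the classical theorem of Hilbert–Nagata: if a linearly reductive group acts rationally on a finitely generated $\Bbbk$-algebra, the subalgebra of invariants is finitely generated. Concretely, one uses the Reynolds operator $R:\cO(M_{\aff})\to\cO(M_{\aff})^{H}$, which is an $\cO(M_{\aff})^{H}$-module projection, to see that $\cO(M_{\aff})^{H}$ is Noetherian and that any ascending chain of ideals stabilizes; combined with the grading coming from the $T'$-action (and the finite averaging over $F$, which is unobstructed since we may also pass to $|F|$-torsion arguments or simply invoke diagonalizability directly) one concludes finite generation. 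So the proof is essentially: (i) $\cO(M)=\cO(M_{\aff})^{H}$ by Theorem \ref{thm:crucial}; (ii) $H$ is diagonalizable because $G_{\ant}$ is semi-abelian (Proposition \ref{prop:antiaffchar}); (iii) diagonalizable implies linearly reductive; (iv) Hilbert–Nagata gives finite generation of $\cO(M_{\aff})^{H}$.

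The main obstacle I anticipate is step (ii)–(iii): one must verify that in positive characteristic $H=G_{\aff}\cap G_{\ant}$ is genuinely diagonalizable — equivalently, that $G_{\ant}$ has no affine subgroups of additive type. This is exactly where Proposition \ref{prop:antiaffchar} is indispensable, since in characteristic $0$ an anti-affine group can have a vector group as its affine part, and then $H$ need not be linearly reductive and $\cO(M)$ need not be finitely generated. A secondary subtlety is the possible non-connectedness of $H$ and the presence of $p$-torsion in its component group; but since a finite diagonalizable group scheme (a constant group when $\Bbbk$ is algebraically closed, here an ordinary finite abelian group) is still linearly reductive in the sense that its representations are semisimple over $\Bbbk$ — again because it embeds in a torus — this causes no real difficulty, provided one states the reductivity claim in terms of diagonalizability rather than invoking the (false in char $p$) implication ``finite $\Rightarrow$ linearly reductive''. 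With those points handled carefully, the argument is short.
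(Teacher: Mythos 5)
Your reduction via Theorem \ref{thm:crucial} to the finite generation of $\cO(M_{\aff})^{H}$, $H=G_{\aff}\cap G_{\ant}$, and your appeal to Proposition \ref{prop:antiaffchar} are exactly the paper's strategy. But steps (ii)--(iii) contain a genuine gap: $H$ need \emph{not} be diagonalizable, and both of the facts you invoke to argue that it is are false in characteristic $p$. First, it is not true that every affine subgroup of the semi-abelian variety $G_{\ant}$ lies in the torus $T$: an affine subgroup $H\subset G_{\ant}$ only has \emph{finite} image $F$ in the abelian variety $A$, so $H$ is an extension of a finite abelian group $F\subset A$ by a subgroup of $T$, and $F$ can contain $p$-torsion (e.g.\ when $A$ is ordinary, $A[p](\bk)\cong(\bZ/p\bZ)^{\dim A}$). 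Second, a constant finite abelian group of order divisible by $p$ does \emph{not} embed in a torus over an algebraically closed field of characteristic $p$: the $p$-torsion of $\bG_m^n$ is the infinitesimal group scheme $\mu_p^n$, whose only $\bk$-point is trivial, so $\bZ/p\bZ$ is not diagonalizable (it is unipotent). Consequently $H$ need not be linearly reductive, there is no Reynolds operator, and Hilbert--Nagata does not apply to $H$ as you have set things up.

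What is true, and what the paper actually uses, is that the identity component $H^0$ is a torus: $H^0$ is connected and affine, so its image in $A$ is trivial, hence $H^0=T=(G_{\ant})_{\aff}$ (which has finite index in $H$ by the Rosenlicht decomposition). One then takes invariants in two stages: $\cO(M_{\aff})^{H}=\bigl(\cO(M_{\aff})^{H^0}\bigr)^{H/H^0}$. The inner ring is finitely generated because $H^0$ is a torus (here your linear-reductivity argument is valid), and the outer step is finite generation of invariants of a \emph{finite} group acting on a finitely generated algebra, which holds in every characteristic --- including the modular case --- not by semisimplicity but by Noether's original argument: $\cO(M_{\aff})^{H^0}$ is integral over the invariants of the finite group (each element is a root of $\prod_{g\in H/H^0}(t-g\cdot x)$), and one concludes by Artin--Tate. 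With that replacement for your steps (iii)--(iv), the proof closes and coincides with the paper's.
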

\begin{proof}
Indeed, by Proposition \ref{prop:antiaffchar}, it follows that
$(G_{\aff}\cap G_{\ant})^0$, the connected component of the identity,
is a torus, and hence
$\cO(M)=\cO(M_{\aff})^{G_{\aff}\cap G_{\ant}}$ is a finitely generated algebra.
\end{proof}

\begin{theorem}
\label{thm:eM}
Let $M$ be a normal algebraic monoid and let $e\in E(M)$ be a central
idempotent.
Let $M_e=\{m\in M\mathrel{:} me=e\}$ and let $G_e=\{g\in G(M)\mathrel{:}
ge=e\}$. Then
\noindent (1) $M_e=\overline{G_e}$, and $M_e$ is an irreducible
algebraic monoid, with
unit group $G_e\subset G_{\aff}$. In particular, $G_e$ and $M_e$ are affine.

\medskip

\noindent (2) The subset $eM\subset M$ is an algebraic monoid, closed in
  $M$. The morphism $\ell_e :M\to eM$, $m\mapsto em$,  is a morphism
  of algebraic
  monoids, with  $M_e=\ell_e^{-1}(1)$.

\medskip

\noindent (3) The unit group of $eM$ equals $G(eM)=eG$. Moreover,
  $(eG)_{\aff}=e(G_{\aff})$  and $(eG)_{\ant}=(eG_{\ant})$.
In particular, the Chevalley decompositions of $eM$ and $eG=G(eM)$
fit into the following commutative diagram of exact sequences
\begin{center}
\mbox{
\xymatrix{
1\ar@{->}[r]&
eM_{{\aff}}\ar@{->}[r]&
eM\ar@{->}[r]^-p
  &A(G) \ar@{->}[r] &0\\
1\ar@{->}[r]&eG_{{\aff}}\ar@{->}[r]\ar@{^(->}[u]&
eG\ar@{->}[r]_-p\ar@{^(->}[u] &A(G) \ar@{->}[r]\ar@{=}[u] & 0
}
}
\end{center}
where $eM_{{\aff}}=\overline{eG_{{\aff}}}=e\overline{G_{{\aff}}}$, and
$eM=\overline{eG}$.

\medskip

\noindent (4) $eM= G_{\ant}eM_{\aff}\cong G_{\ant}*_{G_{\aff}\cap
    G_{\ant}}eM_{\aff}\cong eG_{\ant}*_{eG_{\aff}\cap
    eG_{\ant}}eM_{\aff}$.
\end{theorem}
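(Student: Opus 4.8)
\textbf{Proof proposal for Theorem \ref{thm:eM}(4).}

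The plan is to read off all three descriptions of $eM$ from the earlier parts of the theorem together with Proposition \ref{prop:resultsM} and Theorem \ref{thm:brionrittabel}, applied both to $M$ and to the monoid $eM$. First I would establish the equality $eM=G_{\ant}eM_{\aff}$: since $M=G_{\ant}M_{\aff}$ by Proposition \ref{prop:resultsM}, multiplying on the left by $e$ and using that $e$ is central gives $eM=eG_{\ant}M_{\aff}=G_{\ant}eM_{\aff}$ (the last step again by centrality of $e$). For the first induced-space isomorphism $eM\cong G_{\ant}*_{G_{\aff}\cap G_{\ant}}eM_{\aff}$, I would argue exactly as in the proof of Proposition \ref{prop:resultsM}: the morphism $G_{\ant}\times eM_{\aff}\to eM$, $(g,m)\mapsto gm$, descends to a surjective monoid morphism $\varphi:G_{\ant}*_{G_{\aff}\cap G_{\ant}}eM_{\aff}\to eM$; injectivity follows because if $am=bn$ with $a,b\in G_{\ant}$, $m,n\in eM_{\aff}\subset M_{\aff}$, then $b^{-1}am=n\in M_{\aff}$, so $b^{-1}a\in G_{\aff}\cap G_{\ant}$ by Theorem \ref{thm:brionrittabel}; and $\varphi$ restricts to an isomorphism on unit groups — here I use part (3), namely $G(eM)=eG$ and its Chevalley decomposition, so that $\varphi|_{eG}:eG_{\ant}*_{eG_{\aff}}eG_{\aff}\to eG$, rewritten via the Rosenlicht decomposition of $eG$, is an isomorphism. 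Since $eM$ is normal (it is the fibre-bundle-theoretic image, or simply closed in the normal $M$ with the relevant quotient properties inherited), Zariski's main theorem forces $\varphi$ to be an isomorphism.

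For the third expression, $eM\cong eG_{\ant}*_{eG_{\aff}\cap eG_{\ant}}eM_{\aff}$, I would simply apply Proposition \ref{prop:resultsM} to the monoid $eM$ itself, whose Chevalley data are identified in part (3): its affine submonoid is $eM_{\aff}=e\overline{G_{\aff}}$, its unit group is $eG$ with $(eG)_{\aff}=eG_{\aff}$ and $(eG)_{\ant}=eG_{\ant}$, and the intersection group $(eG)_{\aff}\cap(eG)_{\ant}=eG_{\aff}\cap eG_{\ant}$. This yields directly $eM\cong eG_{\ant}*_{eG_{\aff}\cap eG_{\ant}}eM_{\aff}$. It then only remains to reconcile this with the middle expression, i.e. to check that $G_{\ant}*_{G_{\aff}\cap G_{\ant}}eM_{\aff}$ and $eG_{\ant}*_{eG_{\aff}\cap eG_{\ant}}eM_{\aff}$ are canonically isomorphic; this comes from the surjection $G_{\ant}\to eG_{\ant}$, $g\mapsto eg$, whose kernel is $(G_{\ant})_e\subset G_{\aff}\cap G_{\ant}$ (acting trivially on $eM_{\aff}$ since $(G_{\ant})_e$ fixes $e$ and hence acts as the identity on $eM_{\aff}=eM_{\aff}$), so the two associated-bundle constructions agree.

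The main obstacle I anticipate is the normality input needed to invoke Zariski's main theorem for $\varphi$, i.e. confirming that $eM$ (equivalently $G_{\ant}*_{G_{\aff}\cap G_{\ant}}eM_{\aff}$) is normal. For this I would use that $eM$ is closed in $M$ by part (2) and that $eM=\overline{eG}$ with $eG$ normal (being a homomorphic image of $G$ under the central idempotent, its Chevalley pieces are quotients of those of $G$), combined with the fact that $eM_{\aff}=e\overline{G_{\aff}}$ is a normal affine monoid; normality of the total space of the fibre bundle $G_{\ant}*_{G_{\aff}\cap G_{\ant}}eM_{\aff}\to G_{\ant}/(G_{\aff}\cap G_{\ant})$ then follows from normality of the fibre $eM_{\aff}$ and smoothness of the base, so normality transfers to $eM$ via the birational morphism $\varphi$ once bijectivity is known. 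Everything else is a bookkeeping exercise using the already-established parts (1)--(3) and the two structural theorems quoted above.
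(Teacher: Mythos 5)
Your overall plan -- read everything off from parts (1)--(3) together with Proposition \ref{prop:resultsM} and Theorem \ref{thm:brionrittabel} -- is exactly what the paper intends (its proof of (4) is the one line ``follows from the description above and Proposition \ref{prop:resultsM}''), and most of your bookkeeping is correct: the identity $eM=G_{\ant}eM_{\aff}$, the injectivity of $\varphi$ via $b^{-1}a\in G_{\aff}\cap G_{\ant}$, and the passage from $G_{\ant}*_{G_{\aff}\cap G_{\ant}}eM_{\aff}$ to $eG_{\ant}*_{eG_{\aff}\cap eG_{\ant}}eM_{\aff}$ by dividing out the kernel $(G_{\ant})_e\subset G_{\aff}\cap G_{\ant}$, which indeed acts trivially on $eM_{\aff}$.

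The genuine gap is the step you yourself flag: the appeal to Zariski's main theorem. ZMT needs the \emph{target} $eM$ to be normal, and your argument for that is circular and, as stated, false: you deduce normality of the source $G_{\ant}*_{G_{\aff}\cap G_{\ant}}eM_{\aff}$ (itself resting on the unproved assertion that $eM_{\aff}=e\overline{G_{\aff}}$ is normal -- it is an image of a normal variety, which need not be normal) and then claim ``normality transfers to $eM$ via the birational morphism $\varphi$.'' Normality does not push forward along a bijective birational morphism; that is precisely the situation of the normalization map (e.g.\ $\mathbb A^1\to\{y^2=x^3\}$, $t\mapsto(t^2,t^3)$, is bijective, birational, with normal source and non-normal target, and is not an isomorphism). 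So as written the argument would ``prove'' that the cuspidal cubic is normal. The correct route avoids ZMT entirely: under $M\cong G_{\ant}*_{G_{\aff}\cap G_{\ant}}M_{\aff}$ the map $\ell_e$ is $[g,m]\mapsto[g,em]$ (using $e\in M_{\aff}$ central), so $eM$ is the image of the closed, $(G_{\aff}\cap G_{\ant})$-stable (hence saturated, the action being free) subvariety $G_{\ant}\times eM_{\aff}\subset G_{\ant}\times M_{\aff}$ under the geometric quotient; the quotient therefore restricts to a geometric quotient onto this closed image, giving $eM\cong G_{\ant}*_{G_{\aff}\cap G_{\ant}}eM_{\aff}$ with no normality hypothesis on $eM$ or $eM_{\aff}$. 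With that replacement (and noting that your last step only needs $e(G_{\aff}\cap G_{\ant})=eG_{\aff}\cap eG_{\ant}$, which follows from $G_e\subset G_{\aff}$), the proof is complete.
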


\begin{proof}
(1) Since $M$ is normal at $e$, it follows by
\cite[Corollary 2.2.5]{kn:brionlocal} that
$M_e$ is an irreducible algebraic monoid, with unit group $G_e$.

Since $M\cong G*_{G_{\aff}}M_{\aff}$, it follows that if $ ge=e$, then
$[g,e]= [1,e]$, and hence  $g\in
G_{\aff}$, i.e.\ $G_e\subset G_{\aff}$.

(2) Since $eM=\{x\in M
\mathrel{:} xe=e\}$, it is clear that  $eM$ is a closed subset. Hence,
$eM$ is an algebraic monoid and  $\ell_e: M\to eM$, $\ell_e(m)= em$, is
a morphism of
algebraic monoids.

(3)
Since $\ell_e: M\to eM$ is a surjective morphism of algebraic
monoids, it follows that $G(eM)=eG$. In particular, $G\to eG$ is a
surjective morphism of algebraic groups, and hence,  by \cite[Lemma
1.5]{kn:brionantiaff},   $eG_{\ant}\subset
(eG)_{\ant}$. It is clear that  $eG_{\aff}\subset (eG)_{\aff}$ and
$(eG_{\aff})(eG_{\ant})=eG$. Hence, $eG/eG_{\ant}\cong
eG_{\aff}/(eG_{\aff}\cap eG_{\ant})$ is an affine algebraic
group, since $eG_{\aff}\cap eG_{\ant}$ is a central subgroup of
$eG_{\aff}$. It follows that $(eG)_{\ant}\subset eG_{\ant}$.
On the other hand, it is clear that $eG_{\aff}$ is a normal subgroup of
$eG$, and  the morphism $G\to eG/eG_{\aff}$, $g\mapsto
eg(eG_{\aff})$, induces a surjective morphism $G/G_{\aff}\to
eG/eG_{\aff}$. It follows that   $eG/eG_{\aff}$ is an abelian variety
and hence $eG_{\aff}=(eG)_{\aff}$.

Finally, since $eG\cong G/G_e$, and $G_e\subset G_{\aff}$, it
follows that $\mathcal A(eG)=eG/(eG_{\aff})\cong G/G_{\aff}=\mathcal
A(G)$, with the Albanese morphism fitting into the following
commutative diagram
\begin{center}
\mbox{
\xymatrix{
G\ar@{->}[r]^-{\ell_e}\ar@{->}[d]_-{\alpha_{eG}}&
eG\ar@{->}[d]^-{\alpha_{eG}}\\
\mathcal A(G)=G/G_{\aff}&\mathcal
A(eG)=eG/(eG)_{\aff}\ar@{->}[l]^-{\varphi}
}
}
\end{center}
where $\varphi: eG/(eG)_{\aff}\cong (G/G_e)/(G/G_e)_{\aff}\to
G/G_{\aff}$ is the
canonical isomorphism obtained by observing that $(G/G_e)_{\aff}=G_{\aff}/G_e$.

(4) follows from the description above and Proposition \ref{prop:resultsM}.
\end{proof}

\begin{remark}
Let $M$ be a normal  affine algebraic monoid and let $e\in E(M)$ be a central
idempotent. Then $\ell_e: M\to M$, $\ell_e(m)= em$, is such that
$\ell_e\circ \ell_e=\ell_e$. In other words, $\varphi^* : \cO(eM)\to
\cO(M)$ is a section  for the canonical surjection
$\cO(M)\to \cO(eM)$.
\end{remark}

\begin{corollary}
\label{coro:thmcrucial}
Let $M$ be a normal algebraic monoid and let $e\in E(M)$ be a central
idempotent. Then
$\cO(M)= \cO(eM)\oplus \mathcal I(eM_{\aff})^{G_{\ant}\cap G_{\aff}}$. In
particular, if $e\in E\bigl(\overline{G_{\ant}\cap G_{\aff}}\bigr)$,
then $\cO(M)=\cO(eM)$.
\end{corollary}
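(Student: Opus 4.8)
The plan is to leverage Theorem \ref{thm:eM} together with Theorem \ref{thm:crucial}, applied both to $M$ and to $eM$. First I would note that, since $e$ is a central idempotent, Theorem \ref{thm:eM}(3) gives $(eG)_{\aff}=eG_{\aff}$ and $(eG)_{\ant}=eG_{\ant}$, so that applying Theorem \ref{thm:crucial} to the normal algebraic monoid $eM$ yields
\[
\cO(eM)\cong \cO(eM_{\aff})^{(eG)_{\aff}\cap (eG)_{\ant}}=\cO(eM_{\aff})^{eG_{\aff}\cap eG_{\ant}}.
\]
Meanwhile, Theorem \ref{thm:crucial} applied to $M$ itself gives $\cO(M)\cong \cO(M_{\aff})^{G_{\aff}\cap G_{\ant}}$. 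So the problem reduces to comparing the two rings of invariants $\cO(M_{\aff})^{G_{\aff}\cap G_{\ant}}$ and $\cO(eM_{\aff})^{eG_{\aff}\cap eG_{\ant}}$, where now everything in sight is affine.

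The key observation in the affine setting is the Remark immediately preceding this corollary: for a normal affine monoid and a central idempotent $e$, the retraction $\ell_e:M_{\aff}\to eM_{\aff}$ splits the surjection $\cO(M_{\aff})\to \cO(eM_{\aff})$, so as $\cO(M_{\aff})$-modules (and in particular as vector spaces)
\[
\cO(M_{\aff})=\cO(eM_{\aff})\oplus \mathcal I(eM_{\aff}),
\]
where $\mathcal I(eM_{\aff})$ is the ideal of functions vanishing on $eM_{\aff}\subset M_{\aff}$, which is exactly the kernel of restriction along $\ell_e$. The next step is to check that this direct sum decomposition is stable under the action of $H:=G_{\aff}\cap G_{\ant}$. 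Since $e$ is central, $eM_{\aff}$ is an $H$-stable closed subset of $M_{\aff}$ (indeed $h\cdot(em)=(he)m=(eh)m=e(hm)$ for $h\in H\subset G_{\aff}$ acting by left translation, and the analogue on the right), so both $\mathcal I(eM_{\aff})$ and the image of $\ell_e^\ast$ are $H$-submodules. Taking $H$-invariants is exact, so
\[
\cO(M_{\aff})^{H}=\cO(eM_{\aff})^{H}\oplus \mathcal I(eM_{\aff})^{H}.
\]
Identifying the $H$-action on $\cO(eM_{\aff})$ with the $eH=eG_{\aff}\cap eG_{\ant}$-action (the kernel of $H\to eH$ acts trivially on $eM_{\aff}$, as it consists of elements fixing $e$), the first summand is $\cO(eM)$ by the paragraph above, and we obtain
\[
\cO(M)=\cO(eM)\oplus \mathcal I(eM_{\aff})^{G_{\ant}\cap G_{\aff}},
\]
which is the first assertion.

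For the second assertion I would argue that if $e\in E\bigl(\overline{G_{\ant}\cap G_{\aff}}\bigr)=E(\overline H)$, then $\mathcal I(eM_{\aff})^{H}=0$. The idea: an $H$-invariant function $f$ vanishing on $eM_{\aff}$ must in particular vanish on $eH$; but by $H$-invariance, for $g\in H$ one has $f(g)=f(g\cdot 1)$, and choosing a sequence (or rather, using that $e\in \overline H$) $f(e)=f(\lim h_i)=\lim f(h_i)=\lim f(1)=f(1)$ since $f$ is constant on $H$ — so $f(1)=f(e)=0$, forcing $f\equiv 0$ on $\overline H$, and then $H$-invariance together with $M_{\aff}=\overline{G_{\aff}}$ and the structure $M=G_{\ant}M_{\aff}$ propagates the vanishing. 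The cleanest route is probably: the closed $H$-orbit through $1$ in $M_{\aff}$ (under two-sided translation) is contained in $\overline{He}=\overline{eH}\subset eM_{\aff}$ once $e\in\overline H$, hence every $H$-semiinvariant, in fact every $H$-invariant regular function on $M_{\aff}$ that vanishes on $eM_{\aff}$ vanishes on this orbit and therefore on its closure which meets $\{1\}$, so it vanishes identically by density of $G$ in $M$ and $G_{\aff}$ in $M_{\aff}$. The main obstacle I anticipate is making this last density/propagation argument fully rigorous — in particular justifying that vanishing of an $H$-invariant function at $1$ forces it to vanish on all of $M_{\aff}$, which really uses that $M_{\aff}=\overline{G_{\aff}}$ and that $G_{\aff}$ is generated by translates one can reach from $1$; I would likely phrase it instead via the observation that $\cO(M_{\aff})=\cO(eM_{\aff})\oplus\mathcal I(eM_{\aff})$ with the projection to the first factor being $\ell_e^\ast\circ(\text{restriction})$, so an $H$-invariant $f$ lies in $\mathcal I(eM_{\aff})^H$ iff $\ell_e^\ast(f|_{eM_{\aff}})=0$, and then evaluate at $1$: $f(1)=f(e)+ (\text{something in }\mathcal I(eM_{\aff}))(1)$, and since $1\in eM_{\aff}$ when $e\in\overline H$... — this is the delicate point and where I would spend the most care.
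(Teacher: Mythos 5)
Your proof of the first assertion is correct and follows essentially the same route as the paper: decompose $\cO(M_{\aff})=\cO(eM_{\aff})\oplus\mathcal I(eM_{\aff})$ via the section $\ell_e^*$ of the restriction map, observe that the decomposition is stable under $H:=G_{\aff}\cap G_{\ant}$ because $e$ is central, and take invariants. Two minor points: you do not need (and do not have) exactness of $H$-invariants, only the trivial fact that invariants distribute over a direct sum of submodules; and the identification $\cO(eM_{\aff})^{H}=\cO(eM)$ is available directly from Theorem \ref{thm:eM}(4), $eM\cong G_{\ant}*_{G_{\aff}\cap G_{\ant}}eM_{\aff}$, by the same computation as in Theorem \ref{thm:crucial}, without the detour through $eH=eG_{\aff}\cap eG_{\ant}$ (which itself requires a small argument using $G_e\subset G_{\aff}$).

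The second assertion is where you have a genuine gap, and you correctly flag it. What you actually establish is only that an invariant $f\in\mathcal I(eM_{\aff})^{H}$ satisfies $f(1)=f(e)=0$, and none of your proposed ways of propagating this works: a regular function vanishing at $1$ (or on $\overline H$) need not vanish on $M_{\aff}$, density of $G_{\aff}$ in $M_{\aff}$ gives nothing since $f$ is not known to vanish on a dense subset, and $1\in eM_{\aff}$ fails unless $e=1$. The missing idea is simply to run your continuity argument at \emph{every} point rather than only at $1$. Fix $x\in M_{\aff}$ and consider the regular function $m\mapsto f(mx)$ on $\overline H\subset M_{\aff}$. By $H$-invariance it is constant, equal to $f(x)$, on the dense subset $H\subset\overline H$, hence constant on all of $\overline H$; since $e\in\overline H$ this gives $f(x)=f(ex)$. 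But $ex\in eM_{\aff}$ and $f$ vanishes there, so $f(x)=0$ for every $x$, i.e. $\mathcal I(eM_{\aff})^{H}=0$. This one-line density argument is exactly what the paper does, and it closes your gap.
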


\begin{proof}
Indeed, by Theorem \ref{thm:crucial}, it follows that
\[
\begin{split}
\cO(M)=&\ \cO(M_{\aff})^{G_{\ant}\cap G_{\aff}}=\\
&\ \bigl(\cO(eM_{\aff})\oplus \mathcal
I(eM_{\aff})\bigr)^{G_{\ant}\cap G_{\aff}}= \\
&\ \cO(eM_{\aff})^{G_{\ant}\cap
  G_{\aff}}\oplus \mathcal
I(eM_{\aff})^{G_{\ant}\cap G_{\aff}}= \\
&\ \cO(eM)\oplus \mathcal
I(eM_{\aff})^{G_{\ant}\cap G_{\aff}}.
\end{split}
\]

Assume now that $e\in E\bigl(\overline{G_{\ant}\cap G_{\aff}}\bigr)$ and
let $f\in \cO(eM_{\aff})^{G_{\ant}\cap G_{\aff}}$. If $x\in M$, then
$f(x)=f(g\cdot x)$ for all $g\in  G_{\ant}\cap G_{\aff}$. It follows
that $f(x)=f(ex)$, since $e\in \overline{G_{\ant}\cap G_{\aff}}$.
In particular, if $f\in \mathcal I(eM_{\aff})^{G_{\ant}\cap G_{\aff}}$,
then $f(x)=f(ex)=0$ for all $x\in M$.
\end{proof}

\begin{remark}
\label{rem:idemGant}
The reader should notice that
\[
E\bigl(\overline{G_{\ant}\cap G_{\aff}}\bigr)=E\bigl(\overline{G_{\ant}} \bigr).
\]
Indeed, it follows from \cite[Prop.~3.1]{kn:brionantiaff} that
$(G_{\ant})_{\aff}\subset G_{\aff}\cap G_{\ant}$. Since $E(N)\subset
N_{\aff}$  for any algebraic monoid (\cite[Cor.~2.4]{ritbr}), it
follows that $E\bigl(\overline{G_{\ant}} \bigr)\subset
E\bigl(\overline{G_{\ant}\cap G_{\aff}}\bigr)$, the other inclusion
being obvious.
\end{remark}

\begin{definition}
An algebraic monoid $M$ is \emph{anti-affine} if $\cO(M)=\Bbbk$.
\end{definition}

Let $M$ be an algebraic monoid such that $G(M)$ is an
anti-affine algebraic group. Then $M$ is anti-affine. The
converse is not true, as the following example shows.

\begin{example}
Let $T=\Bbbk^*\times \Bbbk^*$ be an algebraic torus of dimension $2$,
and consider the affine toric variety $T\subset \mathbb A^2$. Then $\mathbb
A^2$ is an affine algebraic monoid with unit group $T$. Let $A$ be a
non-trivial connected abelian  variety and consider an extension
\begin{center}
\mbox{
\xymatrix{
0\ar@{->}[r] &\Bbbk^*=T_1  \ar@{->}[r] & H  \ar@{->}[r] &A \ar@{->}[r]&0
}
}
\end{center}
of algebraic groups such that $H$ is anti-affine.
Let $G=(T\times H)/T_1$, where $T_1\hookrightarrow T\times H$,
$t\mapsto (t,t)$. Then $G_{\aff}=T$, $G_{\ant}=H$, and $G_{\aff}\cap
G_{\ant}=T_1= \bigl\{  (t,t) \mathrel{:} t\in \Bbbk^*\bigr\}\subset T$.

The quotient $M=(\mathbb A^2\times H)/T_1$ is an
algebraic monoid with unit group $G$ and with $M_{\aff}\cong \mathbb A^2$. Thus
\[
\cO(M)=\cO(M_{\aff})^{G_{\aff}\cap
G_{\ant}}= \Bbbk[x,y]^{T_1}=\Bbbk.
\]
Hence, $M$ is an  anti-affine algebraic monoid while
$G(M)$ is not an anti-affine algebraic group.
\end{example}

\begin{definition}
\label{def:stable}
Let $G$ be an algebraic group and $X$ be a $G$-variety. We say that the
action is  \emph{generically stable} (equivalently that $X$ is a
\emph{generically stable
  $G$-variety}) if there exists an open subset consisting of  closed orbits.
We say that an algebraic monoid $M$ is \emph{stable} if it is
generically stable as a $(G_{\aff}\cap G_{\ant})$-variety.
\end{definition}

\begin{remark}
(1) Recall that any  regular action is such that there exists an open subset of
orbits of maximal dimension. Hence, an action $G\times X\to X$  is
generically stable if and only if the set of orbits of maximal dimension contains
an open subset consisting of closed orbits.

\noindent (2) If $M$ is an algebraic monoid, then $M$  is stable if and only if
$G_{\aff}\cap G_{\ant}$ is closed in $M$. Indeed, the coset
$g(G_{\aff}\cap G_{\ant})$, $g\in G$, is closed in $M$ if and only if
$G_{\aff}\cap G_{\ant}$ is closed in $M$.
\end{remark}

\begin{definition}
Let $G$ be an affine algebraic group acting on an affine variety
$X$. We say that the action is \emph{observable} if for every
non-zero $G$-stable ideal $I\subset \cO(X)$, $I^G\neq(0)$. Here we
consider the induced action $G\times \cO(X)\to\cO(X)$, $(a\cdot
f)(x)=f(a^{-1}x)$, for all $a\in G$, $x\in X$, $f\in \cO(X)$.
\end{definition}

The concept of observable action is a generalization of the notion of
observable subgroup. Observable subgroups were introduced by
Bialynicki-Birula, Hochschild and  Mostow   in \cite{kn:obsdef} and
have been researched extensively since then,
notably by F.~Grosshans (see \cite{kn:Gross-14} for a survey on
this topic). Given an affine algebraic group $G$, a closed subgroup
$H\subset G$ is said to be \emph{observable} if $G/H$ is a quasi-affine
algebraic variety. The equivalent definition of $H$ being observable
if every nonzero $H$-stable ideal $I\subset \cO(G)$ has the property
that $I^G\neq (0)$, was first recorded in \cite{fer-ritt}, and then
further generalized in \cite{kn:oaoag}. We present
here some of the basic results that we need in what follows. We
include some of the proofs here for convenience.

\begin{theorem}
\label{thm:obsercharac}
Let $G$ be an affine group acting on an affine variety $X$. Then the
action is observable if and only if
(1) $\bigl[\cO(X)\bigr]^G= \bigl[\cO(X)^G\bigr]$ and (2) $\Omega(X)$
contains a nonempty open subset.
\end{theorem}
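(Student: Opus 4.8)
\textbf{Proof proposal for Theorem \ref{thm:obsercharac}.}

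The plan is to identify $\Omega(X)$ as the union of all closed $G$-orbits of maximal dimension (the set of \emph{stable points}), and then to relate the two stated conditions to the existence of a good rational quotient on a dense open subset. Concretely, I would first recall the general fact (due to Rosenlicht) that there is a $G$-stable dense open subset $U\subseteq X$ and a morphism $\pi:U\to U/G$ realizing a geometric quotient, with $\bk(U/G)=\bk(X)^G$; thus $\bigl[\cO(X)\bigr]^G$ always equals $\bk(X)^G$, and condition (1) says precisely that this field of rational invariants is the fraction field of the \emph{ring} $\cO(X)^G$, i.e.\ that the affine scheme $\Spec\cO(X)^G$ is birational to the rational quotient. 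Condition (2), that $\Omega(X)$ contains a nonempty open subset, is equivalent (after shrinking) to the rational quotient $U\to U/G$ having all fibres equal to single closed orbits over a dense open subset of the base.

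\emph{From observability to (1) and (2):} Assume the action is observable, i.e.\ every nonzero $G$-stable ideal $I\subseteq\cO(X)$ has $I^G\neq(0)$. I would first prove (1). One inclusion $\bigl[\cO(X)^G\bigr]\subseteq\bigl[\cO(X)\bigr]^G$ is automatic. For the reverse, given $f=a/b\in\bigl[\cO(X)\bigr]^G$ with $a,b\in\cO(X)$, consider the $G$-stable ideal $I=\{c\in\cO(X): cf\in\cO(X)\}$ (the ideal of denominators); it is nonzero since $b\in I$, so by observability there is $0\neq d\in I^G$, and then $df\in\cO(X)$ is $G$-invariant (since $f$ is $G$-invariant as a rational function and $d$ is invariant), giving $f=(df)/d\in\bigl[\cO(X)^G\bigr]$. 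For (2): the nonvanishing of invariants on every nonzero $G$-stable ideal forces $\cO(X)^G$ to separate enough $G$-orbits generically; I would make this precise by showing that the image of the morphism $X\to\Spec\cO(X)^G$ is dense and that, over a suitable principal open subset of the image, the fibres are single closed orbits. The key mechanism is that if some orbit $Gx$ of maximal dimension were not closed, its boundary would be a proper $G$-stable closed subset, and one can produce a nonzero $G$-stable ideal vanishing on the closure of $Gx$ but with trivial invariants — contradicting observability. This last point, translating "no separation of generic closed orbits by invariants" into "a nonzero $G$-stable ideal with $I^G=0$", is the step I expect to be the main obstacle, since it requires care about the difference between the categorical/rational quotient and the naive image in $\Spec\cO(X)^G$, and possibly a reduction (replacing $X$ by a suitable $G$-stable affine open piece, or passing to the normalization).

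\emph{From (1) and (2) to observability:} Conversely, assume (1) and (2), and let $I\subseteq\cO(X)$ be a nonzero $G$-stable ideal; I must show $I^G\neq(0)$. By (2) there is a dense open $G$-stable $U\subseteq X$ consisting of closed orbits, and by (1) the invariants $\cO(X)^G$ generate the full field of rational invariants, so $U\to U/G$ is a geometric quotient with $U/G$ quasi-affine and $\cO(U)^G$ having fraction field $\bk(X)^G$. Pick $0\neq g\in I$; then $g$ does not vanish on a dense open subset, so after shrinking $U$ we may assume $g$ is invertible on $U$, and (using closedness of orbits in $U$) the $G$-invariant functions separate the closed orbits in $U$ well enough that some invariant derived from $g$ lies in $I$. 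More precisely: consider the (nonempty, $G$-stable, closed in $U$) zero set $\cV(I)\cap U$; since orbits in $U$ are closed and invariants separate them, and $\cV(I)\cap U\neq U$, there is an invariant $h\in\cO(U)^G$ vanishing on $\cV(I)\cap U$ but not identically zero, and clearing denominators against the complement of $U$ (which has finite codimension issues controlled because $X$ is affine and normal — here one uses $\codim\ge 2$ or a principal-open reduction) produces $0\neq h'\in\cO(X)^G\cap\sqrt I$; a suitable power lies in $I$, giving a nonzero element of $I^G$. I would flag that the delicate point is again the passage between $U$ and $X$: one needs that a $G$-invariant regular function on $U$ extends, after multiplication by a suitable invariant, to $X$, which is where condition (1) is used in an essential way together with normality of $X$.
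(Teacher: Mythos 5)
Your argument for condition (1) in the forward direction is exactly the paper's: the ideal of denominators $I=\{c\in\cO(X)\mathrel{:} cg\in\cO(X)\}$ is a nonzero $G$-stable ideal, observability produces $0\neq d\in I^G$, and then $g=(dg)/d\in\bigl[\cO(X)^G\bigr]$. That part is correct. The problem is your ``key mechanism'' for condition (2). You propose to derive a contradiction from a \emph{single} non-closed orbit of maximal dimension, by producing from it a nonzero $G$-stable ideal with trivial invariants. If that worked, it would prove that every maximal-dimensional orbit of an observable action is closed, and that statement is false: let $G=\Bbbk^*$ act on $\bA^2$ by $t\cdot(x,y)=(tx,t^{-1}y)$. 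Any nonzero $G$-stable ideal of $\Bbbk[x,y]$ contains a nonzero weight vector $x^np(xy)$ or $y^np(xy)$, hence contains the nonzero invariant $(xy)^np(xy)$, so the action is observable; yet the punctured $x$-axis is a maximal-dimensional orbit that is not closed. The paper's argument avoids this entirely: with $X_{\max}$ the open set of points whose orbit has maximal dimension and $Y=X\setminus X_{\max}$, observability applied to the $G$-stable ideal $\mathcal I(Y)$ gives $0\neq f\in\mathcal I(Y)^G$; for any orbit $\cO\subset X_f$ the boundary $\overline{\cO}\setminus\cO$ consists of orbits of strictly smaller dimension, hence lies in $Y$, so if it were nonempty the invariant $f$ (constant on $\overline{\cO}$ and zero on $Y$) would vanish on $\cO$, contradicting $\cO\subset X_f$. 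Thus $X_f$ is a nonempty open set of closed orbits. You should replace your mechanism by this one.

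For the converse you attempt more than the paper does: the paper proves only the forward implication and cites \cite[Thm.~3.10]{kn:oaoag} for the reverse, so there is nothing to compare against, but as written your sketch does not close the gap. Two steps are unsupported. First, you assert that invariants separate the closed orbits in $U$ from the closed $G$-stable set $\mathcal V(I)\cap U$; separating disjoint closed $G$-stable subsets by invariant functions is exactly what fails for non-reductive groups (which is why Proposition \ref{prop:partialconv} assumes a Levi decomposition $G=L\times U$ with $L$ reductive), so this must be derived from hypothesis (1) rather than assumed. Second, your ``clearing denominators'' step invokes normality of $X$ and a codimension-two argument, but $X$ is not assumed normal in the statement. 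Either reproduce the argument of the cited reference or, as the paper does, prove only the forward implication and refer out for the converse.
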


\begin{proof}
We will only prove that if the action is observable then conditions (1)
and (2) hold. We refer the reader to \cite[Thm.~3.10]{kn:oaoag} for a
complete proof.

Clearly $\bigl[\cO(X)^G\bigr]\subset \bigl[\cO(X)\bigr]^G $. Let $g\in
\bigl[\cO(X)\bigr]^G$, and consider the ideal $I=\bigl\{ f\in
\cO(X)\mathrel{:} fg\in \cO(X)\bigr\}$. Clearly $I$ is $G$-invariant,
and hence there exists $ f\in \cO(X)^G$ such that $fg\in\cO(X)^G$.

Let now $X_{\max}$ be the (open) subset of orbits of maximal dimension and let
$Y=X\setminus X_{\max}$. Let $0\neq f\in \mathcal I (Y)^G$. Then the affine open
subset $X_f$ is is $G$-stable subset contained in $X_{\max}$. Let
$\cO\subset X_f$ be an orbit. Since every
orbit is open in its closure, it follows that if
$\overline{\cO}\neq \cO$ then $\overline{\cO}\cap Y\neq
\emptyset$. Since $f$ is constant on the orbits, it follows that
$f|_{_\cO}=0$ and hence $\cO\subset Y$, that is a contradiction.
\end{proof}

In \cite{kn:oaoag} the reader can find examples showing that both
conditions (1) and (2) of Theorem \ref{thm:obsercharac}   are
necessary. However, for some families of algebraic groups, generic
stability of the action implies observability. This is
the case when $G$ is a reductive group (see \cite[Thm.~4.7]{kn:oaoag}),
or when $G=U\times L$, where $L$ is reductive and $U$ is unipotent.

\begin{proposition}
\label{prop:partialconv}
Let $G$ be an affine algebraic group with Levi decomposition
$G=L\times U$, as above. Assume that $X$ is a generically stable
affine $G$-variety. Then the action is observable.
\end{proposition}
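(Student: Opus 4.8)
The plan is to reduce the general case to the two special cases already available in the literature and quoted just above, namely that generic stability implies observability when $G$ is reductive and when $G$ is unipotent. Write $G=L\times U$ with $L$ reductive and $U$ unipotent (normal in $G$). First I would pass to the $U$-action on $X$. Since $U$ is unipotent, its orbits on the affine variety $X$ are closed, so every point of $X$ lies on a closed $U$-orbit; in particular the $U$-action is trivially generically stable, and by the unipotent case it is observable. Hence $\bigl[\cO(X)\bigr]^U=\bigl[\cO(X)^U\bigr]$, and moreover $\cO(X)^U$ is the ring of functions of a quasi-affine $U$-quotient; by a standard argument (Grosshans) one can realize $\cO(X)^U$ as the ring of regular functions of an affine variety $X/\!/U$ after possibly enlarging, but the cleanest route is to work directly with $\cO(X)^U$ as a (possibly non-finitely-generated) $\bk$-algebra carrying an $L$-action, and to check observability of the $G$-action on $X$ against the two criteria of Theorem \ref{thm:obsercharac}.

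Next I would verify criterion (2), that $\Omega(X)$ — the union of closed $G$-orbits — contains a nonempty open subset. By hypothesis there is a nonempty open $X_0\subset X$ all of whose $G$-orbits are closed in $X$; shrinking $X_0$ we may assume it consists of orbits of maximal dimension. So (2) holds outright. For criterion (1), I must show $\bigl[\cO(X)\bigr]^G=\bigl[\cO(X)^G\bigr]$. Take $q\in\bigl[\cO(X)\bigr]^G$. Since $G=L\times U$ and $q$ is $U$-invariant, $q\in\bigl[\cO(X)\bigr]^U=\bigl[\cO(X)^U\bigr]$ by the previous paragraph, so $q$ is a fraction of $U$-invariant regular functions, and it is $L$-invariant for the induced $L$-action on $[\cO(X)^U]$. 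Now I would like to apply the reductive case to the $L$-action on ``$\Spec\cO(X)^U$'': if that $L$-action is generically stable, then it is observable, giving $\bigl[\cO(X)^U\bigr]^L=\bigl[(\cO(X)^U)^L\bigr]=\bigl[\cO(X)^G\bigr]$, and since $q\in\bigl[\cO(X)^U\bigr]^L$ this finishes (1). So the remaining point is to see that closed $G$-orbits in $X$ descend to closed $L$-orbits after dividing by $U$, i.e. that the image of $X_0$ under $X\dashrightarrow \Spec\cO(X)^U$ is a nonempty open set of closed $L$-orbits. Here one uses that for $x$ in the open set of closed $G=L\times U$-orbits, $Gx=Lx\cdot U$-orbit structure forces the $L$-orbit of the image of $x$ to be closed, because the fibers of $X\dashrightarrow\Spec\cO(X)^U$ over points of the relevant locus are single $U$-orbits (generic freeness / the $U$-action being ``nice'' on a suitable invariant open set), and a closed $G$-orbit maps onto a closed $L$-orbit under such a quotient.

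The main obstacle will be exactly this last descent step: controlling the behavior of the $U$-quotient $X\dashrightarrow\Spec\cO(X)^U$ on the open locus of closed $G$-orbits, since $\cO(X)^U$ need not be finitely generated and the ``quotient'' is only a rational map in general. I would handle it by replacing $X$ with a $G$-stable affine open subset $X'$ on which the $U$-action is free (or at least on which $\cO(X')^U$ is finitely generated, e.g. by choosing $X'=X_f$ for a suitable $U$-semiinvariant, invoking that over the maximal-dimension locus $U$ acts with closed orbits of constant dimension), so that $X'/\!/U$ is a genuine affine $L$-variety with $\cO(X'/\!/U)=\cO(X')^U$ and $\cO(X')^G=\cO(X'/\!/U)^L$; then the argument above applies verbatim on $X'$, and since $X'$ is dense in $X$ and the two field-of-fractions conditions only depend on a dense open set, the conclusion transfers back to $X$. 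An alternative, cleaner reduction, if available, is to cite directly the result of \cite{kn:oaoag} that observability is preserved under such normal-subgroup decompositions; but absent that I would spell out the two-step reduction as above.
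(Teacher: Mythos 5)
Your strategy is genuinely different from the paper's, and it has a gap exactly at the step you flag yourself: the descent through the $U$-quotient. Two concrete problems. First, to invoke the reductive case for the $L$-action on $\Spec\cO(X)^U$ you would need to know both that this is an affine $L$-variety (i.e.\ that $\cO(X)^U$ is finitely generated, which can fail) and that the induced $L$-action is generically stable; neither is established, and the heuristic that a closed $G$-orbit maps onto a closed $L$-orbit ``because the fibers are single $U$-orbits'' requires a genuine argument (Rosenlicht's generic quotient) that you do not supply. Second, the fallback of replacing $X$ by a dense $G$-stable affine open $X'$ does not transfer back: condition (1) of Theorem \ref{thm:obsercharac} is \emph{not} a condition that depends only on a dense open subset. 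One has $\bigl[\cO(X)\bigr]^G=\bigl[\cO(X')\bigr]^G$ and $\bigl[\cO(X)^G\bigr]\subset\bigl[\cO(X')^G\bigr]$, but knowing $\bigl[\cO(X')\bigr]^G=\bigl[\cO(X')^G\bigr]$ does not show that a $G$-invariant rational function is a quotient of $G$-invariants \emph{regular on all of $X$}. (Moreover you propose $X'=X_f$ for a $U$-semiinvariant $f$, in which case $X_f$ need not even be $G$-stable.) So criterion (1) for $X$ is not obtained, and observability of the action on $X$ does not follow.

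The paper's proof sidesteps all of this by verifying the ideal-theoretic definition directly: given a nonzero $G$-stable ideal $I\subsetneq\cO(X)$, generic stability yields a closed orbit $Z$ with $Z\cap\mathcal V(I)=\emptyset$; since $L$ is reductive and $I$, $\mathcal I(Z)$ are $L$-stable with $I+\mathcal I(Z)=\cO(X)$, averaging produces $f\in I^L$ with $f|_{_Z}=1$, so $I^L\neq(0)$; and since $U$ commutes with $L$, the nonzero locally finite $U$-module $I^L=I\cap\cO(X)^L$ has a nonzero $U$-fixed vector, whence $I^G=(I^L)^U\neq(0)$. This is both shorter and avoids any quotient construction; if you want to salvage your write-up, I would replace the two-step quotient reduction by this direct argument.
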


\begin{proof}
Let $I\subsetneq \cO(X)$ be a non-zero $G$-stable ideal. Then
$\mathcal V(I)\neq \emptyset $ is a proper closed subset. Since there
exists a open subset of  closed orbits, then there exists a closed orbit
$Z$ such that $Z\cap \mathcal V(I)=\emptyset$. Since $L$ is reductive
and that $Z$ is $L$-stable,
it follows that there exists $f\in \cO(X)^L$ such that $f\in I$,
$f|_{_{Z}}=1$. Hence, $I^L\neq (0)$. Since $U$ normalizes $L$,
it follows that $I\cap \cO(X)^L\neq \{0\}$ is an $U$-submodule, and hence
$I^G=(I^L)^U\neq (0)$.
\end{proof}

\begin{proposition}
\label{prop:antiaffstable}
Let $M$ be a  stable algebraic monoid, with $G(M)$ an anti-affine
algebraic group. Then $M=G(M)$.
\end{proposition}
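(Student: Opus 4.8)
The plan is to show that under these hypotheses the affine part $M_{\aff}$ collapses to the group $G_{\aff}$, which forces $M = G(M)M_{\aff} = G(M)$. First I would use Proposition \ref{prop:resultsM} to write $M \cong G_{\ant}*_{G_{\aff}\cap G_{\ant}}M_{\aff}$, so that $\cO(M) \cong \cO(M_{\aff})^{H}$ by Theorem \ref{thm:crucial}, where $H := G_{\aff}\cap G_{\ant}$. Since $G(M)$ is anti-affine, $\cO(M) = \cO(G(M)) = \Bbbk$, hence $\cO(M_{\aff})^{H} = \Bbbk$. The goal is now purely a statement about the affine monoid $M_{\aff}$ with its $H$-action: if $M_{\aff}$ is stable (equivalently, by the Remark, $H$ is closed in $M_{\aff}$) and $\cO(M_{\aff})^{H} = \Bbbk$, then $M_{\aff} = G_{\aff}$.

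The key step is to invoke observability. Since $G(M)$ is anti-affine it is commutative, hence $H$ is commutative, hence its Levi decomposition has reductive part a torus and unipotent part a commutative unipotent group, so $H = L\times U$ in the sense of Proposition \ref{prop:partialconv}. Stability of $M_{\aff}$ means the $H$-action has an open set of closed orbits, so Proposition \ref{prop:partialconv} applies and the $H$-action on $M_{\aff}$ is observable. By Theorem \ref{thm:obsercharac}, observability gives $[\cO(M_{\aff})]^{H} = [\cO(M_{\aff})^{H}] = [\Bbbk] = \Bbbk$. Thus every $H$-invariant rational function on $M_{\aff}$ is constant. On the other hand, $M_{\aff} \cong G_{\aff}*_{?}\cdots$—more directly, the generic fibre of the quotient map by $H$ has dimension $\dim H$, and since invariant rational functions separate generic orbits, $[\Bbbk(M_{\aff})]^{H} = \Bbbk$ forces $\dim M_{\aff} = \dim H$. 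Combined with $H \subseteq G_{\aff} \subseteq M_{\aff}$ and irreducibility of $M_{\aff}$, and the fact that $H$ is closed, one concludes $H = M_{\aff}$; but $H$ is a group, so $M_{\aff}$ is a group, whence $M_{\aff} = G_{\aff}$ and $M = G_{\ant}M_{\aff} = G_{\ant}G_{\aff} = G(M)$.

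The main obstacle I anticipate is the dimension-counting/"generic separation" step: passing from $[\Bbbk(M_{\aff})]^{H} = \Bbbk$ to $\dim M_{\aff} = \dim H$ requires knowing that generic $H$-orbits are separated by invariant rational functions, i.e.\ that the field of invariants has transcendence degree equal to $\dim M_{\aff} - (\text{generic orbit dimension})$, and that the generic orbit dimension is $\dim H$ because $H$ acts freely on the open dense unit group $G_{\aff} \subseteq M_{\aff}$ (or at least with finite stabilizers, since $H$ acts by translations on the group). This is standard Rosenlicht-type theory, but one should be a little careful in positive characteristic; alternatively, one can avoid the transcendence-degree argument altogether by noting that $H$ closed in $M_{\aff}$ together with triviality of $\cO(M_{\aff})^{H}$ and observability already pin down $M_{\aff}$: the affinization $M_{\aff} \to \Spec \cO(M_{\aff})^{H}$ of the quotient is a point, so $M_{\aff}$ is a single closed $H$-orbit, namely the orbit of $1$, which is $H$ itself.
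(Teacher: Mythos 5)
Your argument is correct, but it takes a much heavier route than the paper's, which is essentially two lines: since $G=G(M)$ is anti-affine we have $G_{\ant}=G$, hence $G_{\aff}\cap G_{\ant}=G_{\aff}$; stability says precisely (see the Remark following Definition \ref{def:stable}) that $G_{\aff}\cap G_{\ant}$ is closed in $M$, whence $M_{\aff}=\overline{G_{\aff}}=\overline{G_{\aff}\cap G_{\ant}}=G_{\aff}$ and $M=GM_{\aff}=GG_{\aff}=G$. You never exploit the identity $G_{\aff}\cap G_{\ant}=G_{\aff}$, and instead run the full invariant-theoretic machine: Theorem \ref{thm:crucial} to get $\cO(M_{\aff})^{H}=\Bbbk$ for $H=G_{\aff}\cap G_{\ant}$, Proposition \ref{prop:partialconv} for observability (legitimate, since $H$ is central in $G$, hence commutative, hence of the form $T\times U$), Theorem \ref{thm:obsercharac} to pass to rational invariants, and a Rosenlicht-type separation argument to force $\dim M_{\aff}=\dim H$ and hence $M_{\aff}=H=G_{\aff}$. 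Each step holds (Rosenlicht's theorem on generic separation by invariant rational functions is characteristic-free, and $H$ acts freely on the dense open subset $G_{\aff}\subset M_{\aff}$), so the proof is valid. What your route buys is that it in effect establishes the stronger Corollary \ref{coro:stableantiaff} directly: you only use $\cO(M)=\Bbbk$ together with stability, not anti-affineness of $G$ as such, whereas the paper derives that corollary from Theorem \ref{thm:stablefracfield} combined with this proposition. The cost is that your proof of this particular proposition leans on the whole observability apparatus where a one-line reduction suffices.
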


\begin{proof}
If $G=G(M)$ is anti-affine, then $G_{\aff}= G_{\aff}\cap G_{\ant}$, and
since $M$ is stable, it follows that $M_{\aff}=\overline{G_{\aff}} =
\overline{ G_{\aff}\cap G_{\ant}}= G_{\aff}\cap G_{\ant}=
G_{\aff}$. Hence $M=GM_{\aff}=GG_{\aff}=G$.
\end{proof}

\begin{theorem}
\label{thm:stablefracfield}
Let $M$ be a stable normal algebraic monoid. Then
$\bigl[\cO(M)\bigr]=\bigl[\cO(G)\bigr]$.
\end{theorem}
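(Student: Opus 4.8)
The plan is to reduce everything to the affine submonoid $M_{\aff}$ via Theorem \ref{thm:crucial}, and there exploit stability to invoke the observability machinery. By Theorem \ref{thm:crucial} we have $\cO(M)\cong\cO(M_{\aff})^{G_{\aff}\cap G_{\ant}}$. Since $M$ is stable, $G_{\aff}\cap G_{\ant}$ acts on the affine variety $M_{\aff}$ with an open subset of closed orbits, i.e.\ generically stably. Now $H:=G_{\aff}\cap G_{\ant}$ is central in $G_{\ant}$ (it sits inside the anti-affine, hence commutative, group $G_{\ant}$), so in particular $H$ is a commutative affine algebraic group; writing $H=L\times U$ with $L$ a torus and $U$ unipotent, we are in the situation of Proposition \ref{prop:partialconv}. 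Hence the action of $H$ on $M_{\aff}$ is \emph{observable}, and by Theorem \ref{thm:obsercharac}(1),
\[
\bigl[\cO(M_{\aff})^{H}\bigr]=\bigl[\cO(M_{\aff})\bigr]^{H}.
\]

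The second step is to identify the right-hand side with $[\cO(G)]$. We have $\bigl[\cO(M_{\aff})\bigr]=\bk(M_{\aff})=\bk(G_{\aff})$, since $G_{\aff}$ is dense in $M_{\aff}$; and $\bk(G_{\aff})^{H}$ should be $\bk(G_{\aff}/H)=\bk(G_{\aff}G_{\ant}/G_{\ant})=\bk(G/G_{\ant})$, using the Rosenlicht product $G=(G_{\aff}\times G_{\ant})/H$. On the other hand $\cO(G)=\cO(G/G_{\ant})$ because $G_{\ant}=\Ker(\varphi_G)$ and $\cO(G)=\cO(\Spec\cO(G))$, so $[\cO(G)]=\bk(G/G_{\ant})$ (the affinization $G/G_{\ant}$ being an affine algebraic group, its function field is the field of fractions of its coordinate ring). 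Chaining the identifications, $[\cO(M)]=[\cO(M_{\aff})^{H}]=\bk(G_{\aff})^{H}=\bk(G/G_{\ant})=[\cO(G)]$, which is the assertion.

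I would organize the write-up as: (i) apply Theorem \ref{thm:crucial}; (ii) check $H=G_{\aff}\cap G_{\ant}$ is of the form $L\times U$ so that Proposition \ref{prop:partialconv} applies to the (generically stable, by hypothesis) action on $M_{\aff}$, concluding observability and hence $[\cO(M_{\aff})^{H}]=[\cO(M_{\aff})]^{H}=\bk(G_{\aff})^{H}$; (iii) compute $\bk(G_{\aff})^{H}=\bk(G_{\aff}/H)$ and match it against $\bk(G/G_{\ant})=[\cO(G)]$ using the Rosenlicht decomposition of $G$ and the fact that $G_{\ant}$ is the kernel of the affinization.

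The main obstacle I anticipate is step (iii): making precise that $\bk(G_{\aff})^{H}=\bk(G_{\aff}/H)$ and that this geometric quotient really is $G/G_{\ant}$ as a variety (not merely that the function fields are abstractly isomorphic). The subtlety is that $H$ may be disconnected — it contains $(G_{\ant})_{\aff}$ only with finite index in $G_{\aff}\cap G_{\ant}$ per the Rosenlicht decomposition — so one must be careful that the invariants of the function field under this possibly non-connected group are still the rational functions on the quotient; here the identity $G=(G_{\aff}\times G_{\ant})/H$ with $H$ acting by $h\mapsto(h,h^{-1})$ is exactly what pins down $G_{\aff}/H\cong G/G_{\ant}$ and resolves this. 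A secondary point to handle cleanly is verifying that $H$, being a closed subgroup of the commutative affine group $(G_{\ant})_{\aff}\cdot(\text{finite})$, decomposes as torus-by-unipotent so Proposition \ref{prop:partialconv} genuinely applies; in characteristic $p$ this is immediate from Proposition \ref{prop:antiaffchar}, and in characteristic $0$ it follows from the structure theory of commutative affine algebraic groups.
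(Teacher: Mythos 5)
Your proposal is correct and follows essentially the same route as the paper: reduce to $\cO(M_{\aff})^{G_{\aff}\cap G_{\ant}}$ via Theorem \ref{thm:crucial}, use stability together with Proposition \ref{prop:partialconv} and Theorem \ref{thm:obsercharac} to move the invariants outside the fraction field, and then identify the result with $\bigl[\cO(G)\bigr]$. The only (cosmetic) difference is the last step: the paper applies the same invariant-theoretic reasoning to $G$ itself to get $\bigl[\cO(G)\bigr]=\bigl[\cO(G_{\aff})\bigr]^{G_{\aff}\cap G_{\ant}}$, whereas you identify $\bk(G_{\aff})^{G_{\aff}\cap G_{\ant}}$ with $\bk(G/G_{\ant})=\bigl[\cO(G)\bigr]$ through the quotient --- the same computation in geometric clothing.
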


\begin{proof}
Theorem \ref{thm:crucial} guarantees that   $\bigl[\cO(M)\bigr]=
\bigl[\cO(M_{\aff})^{G_{\aff}\cap G_{\ant}}\bigr]$. Since $M$
is stable then, by Proposition \ref{prop:partialconv} and Theorem
\ref{thm:obsercharac},
 it follows that
$\bigl[\cO(M)\bigr]= \bigl[\cO(M_{\aff})\bigr]^{G_{\aff}\cap
  G_{\ant}}$. But $M_{\aff}$ and $G_{\aff}$ being affine varieties, it
follows that
$\bigl[\cO(M)\bigr]=\bigl[\cO(G_{\aff})\bigr]^{G_{\aff}\cap
  G_{\ant}}$. Now,  applying the same reasoning to the algebraic
group $G$, we obtain that $\bigl[\cO(M)\bigr]=
\bigl[\cO(G_{\aff})^{G_{\aff}\cap G_{\ant}}\bigr]=\bigl[\cO(G)\bigr]$.
\end{proof}

\begin{corollary}
\label{coro:stableantiaff}
Let $M$ be an stable monoid. If  $M$ is
anti-affine, then $M$ is an (anti-affine) algebraic group.
\end{corollary}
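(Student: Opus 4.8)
The plan is to combine Theorem~\ref{thm:stablefracfield} with Proposition~\ref{prop:antiaffstable}. Since $M$ is stable, Theorem~\ref{thm:stablefracfield} gives $\bigl[\cO(M)\bigr]=\bigl[\cO(G)\bigr]$. If moreover $M$ is anti-affine, then $\cO(M)=\Bbbk$, so $\bigl[\cO(G)\bigr]=\Bbbk$ as well, which forces $\cO(G)=\Bbbk$; that is, $G=G(M)$ is an anti-affine algebraic group. Now we are exactly in the situation of Proposition~\ref{prop:antiaffstable}: $M$ is a stable algebraic monoid whose unit group is anti-affine. That proposition yields $M=G(M)$, so $M$ is an algebraic group, and it is anti-affine by hypothesis.

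More explicitly, first I would record that $\cO(G)\subset \bigl[\cO(G)\bigr]=\bigl[\cO(M)\bigr]=\Bbbk$, so every regular function on $G$ is constant. Next I would invoke Proposition~\ref{prop:antiaffstable} to conclude $M=G$. No further argument is needed for the parenthetical: $M$ is anti-affine by assumption, and being equal to its own unit group it is an algebraic group.

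The only point requiring a moment's care is the passage from $\bigl[\cO(G)\bigr]=\Bbbk$ to $\cO(G)=\Bbbk$, but this is immediate: $\cO(G)$ is an integral domain containing $\Bbbk$ and contained in its own fraction field, so if that fraction field is $\Bbbk$ then $\cO(G)=\Bbbk$. I do not anticipate any genuine obstacle here; the corollary is a direct synthesis of the two results just proved, and the short proof below reflects that.

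\begin{proof}
Since $M$ is stable and normal, Theorem~\ref{thm:stablefracfield} gives
$\bigl[\cO(M)\bigr]=\bigl[\cO(G)\bigr]$. As $M$ is anti-affine,
$\cO(M)=\Bbbk$, so $\bigl[\cO(G)\bigr]=\Bbbk$, and therefore
$\cO(G)=\Bbbk$; that is, $G=G(M)$ is an anti-affine algebraic group.
By Proposition~\ref{prop:antiaffstable}, $M=G(M)$. Hence $M$ is an
algebraic group, which is anti-affine by hypothesis.
\end{proof}
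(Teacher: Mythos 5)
Your proof is correct and follows exactly the paper's own route: Theorem~\ref{thm:stablefracfield} to deduce that $G(M)$ is anti-affine, then Proposition~\ref{prop:antiaffstable} to conclude $M=G(M)$. The only difference is that you spell out the (easy) passage from $\bigl[\cO(G)\bigr]=\Bbbk$ to $\cO(G)=\Bbbk$, which the paper leaves implicit.
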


\begin{proof}
By Theorem \ref{thm:stablefracfield}, it follows that $G=G(M)$ is an
anti-affine algebraic group.  Hence, by Proposition
\ref{prop:antiaffstable}, it follows that $M=G$.
\end{proof}

\begin{proposition}
\label{prop:existseMstab}
Let $M$ be a normal  algebraic monoid and let $e\in E(M)$ be a central
idempotent.  Then $eM$ is
stable if and only if $ef=e$ for all $f\in
E\bigl(\overline{G_{\ant}}\bigr)$. In particular, $eM$ is stable if
and only if
$ef_0=e$, where $f_0\in  E\bigr(\overline{G_{\ant}}\bigr)$ is the
minimum idempotent.
   \end{proposition}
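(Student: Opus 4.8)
The plan is to characterize stability of $eM$ via the criterion from Remark following Definition \ref{def:stable}, namely that $eM$ is stable if and only if $(eG)_{\aff}\cap (eG)_{\ant}$ is closed in $eM$. By Theorem \ref{thm:eM}(3) we have $(eG)_{\aff}\cap (eG)_{\ant}=eG_{\aff}\cap eG_{\ant}$, and by part (1) of that theorem the retraction $\ell_e$ identifies this subgroup with a quotient of $G_{\aff}\cap G_{\ant}$; so the question becomes whether $e(G_{\aff}\cap G_{\ant})=0_{eM}\cdot$-type orbit is closed inside $eM$. First I would reduce to the affine setting: by Theorem \ref{thm:eM}(4), $eM=G_{\ant}*_{G_{\aff}\cap G_{\ant}}eM_{\aff}$, and a coset of $G_{\aff}\cap G_{\ant}$ is closed in $eM$ precisely when the corresponding orbit is closed in $eM_{\aff}=\overline{eG_{\aff}}$, which is an affine monoid. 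So it suffices to decide when $e(G_{\aff}\cap G_{\ant})$ is closed in $eM_{\aff}$.

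Next I would translate closedness of $e(G_{\aff}\cap G_{\ant})$ in $eM_{\aff}$ into a statement about idempotents. The key point is that the closure of the subgroup $e(G_{\aff}\cap G_{\ant})=(G_{\aff}\cap G_{\ant})e$ inside $eM_{\aff}$ is $e\cdot\overline{G_{\aff}\cap G_{\ant}}$, and this closure contains extra points exactly when $e\overline{G_{\aff}\cap G_{\ant}}$ has idempotents other than $e$ itself — equivalently, when there is $f\in E(\overline{G_{\aff}\cap G_{\ant}})$ with $ef\neq e$. Using Remark \ref{rem:idemGant}, $E(\overline{G_{\aff}\cap G_{\ant}})=E(\overline{G_{\ant}})$, so this reads: $eM$ is stable if and only if $ef=e$ for every $f\in E(\overline{G_{\ant}})$. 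To make the "closure contains a new idempotent iff it is strictly bigger" step precise, I would invoke the standard structure theory of affine algebraic monoids: a submonoid of an affine monoid whose unit group is $H$ has closure $\overline{H}$, and $\overline{H}\setminus H$ is nonempty if and only if $\overline{H}$ has an idempotent $\neq 1_{\overline{H}}$ (since a non-group affine monoid always contains a nontrivial idempotent, applied to the kernel). Here $1_{e\overline{G_{\aff}\cap G_{\ant}}}=e$, so a new point forces a new idempotent $f$ with $ef\neq e$, and conversely such an $f$ lies in the closure but not in the group $e(G_{\aff}\cap G_{\ant})$.

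For the "in particular" clause, I would use that $\overline{G_{\ant}}$ (equivalently $\overline{G_{\aff}\cap G_{\ant}}$) is an irreducible affine algebraic monoid, hence its idempotent set $E$, being the idempotent set of an affine monoid, is a finite poset that is directed downward — in fact contains a unique minimum idempotent $f_0$ (this follows because $\overline{G_{\ant}}$ is irreducible with dense unit group, so it has a unique closed $G\times G$-orbit, i.e.\ a unique minimal two-sided ideal, whose identity is the minimum idempotent). Then $ef=e$ for all $f\in E$ is equivalent to $ef_0=e$: one direction is trivial, and for the other, given any $f\in E$ we have $f_0 f=f_0=f f_0$ by minimality, so $ef=e f f_0\cdots$ — more directly, $e=ef_0=e f_0 f=(ef_0)f=ef$ using $f_0=f_0 f$.

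I expect the main obstacle to be the first reduction — carefully justifying that closedness of the $(G_{\aff}\cap G_{\ant})$-coset in $eM$ is equivalent to closedness of the corresponding orbit in the affine piece $eM_{\aff}$, through the induced-space description $eM\cong G_{\ant}*_{G_{\aff}\cap G_{\ant}}eM_{\aff}$. The point is that under the quotient map $G_{\ant}\times eM_{\aff}\to eM$, the fiber $eM_{\aff}$ sits as a closed subvariety (it is $\pi^{-1}$ of the base point), and a $G_{\aff}\cap G_{\ant}$-stable subset of $eM_{\aff}$ is closed in $eM_{\aff}$ iff its image is closed in $eM$; combined with the remark that all cosets of $G_{\aff}\cap G_{\ant}$ in $G$ are closed in $M$ simultaneously, this gives the needed equivalence. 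Once that is in place, the idempotent bookkeeping above is routine.
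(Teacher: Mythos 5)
Your proposal is correct and follows essentially the same route as the paper: both reduce stability of $eM$ to closedness of the subgroup $e(G_{\aff}\cap G_{\ant})$, detect non-closedness by the presence of idempotents $f$ with $ef\neq e$ in the closure $\overline{e(G_{\aff}\cap G_{\ant})}=e\,\overline{G_{\aff}\cap G_{\ant}}$ (the paper packages this step as $E\bigl(\overline{e(G_{\aff}\cap G_{\ant})}\bigr)=eE\bigl(\overline{G_{\aff}\cap G_{\ant}}\bigr)$ via a citation to Renner's book), and conclude with the minimum idempotent $f_0$. The only cosmetic differences are your explicit reduction to the affine piece $eM_{\aff}$ and your derivation of the existence and behaviour of $f_0$ from the kernel of $\overline{G_{\ant}}$, where the paper instead uses the Levi decomposition $G_{\aff}\cap G_{\ant}=TU$ and the finiteness of $E(\overline{T})$ for the toric variety $\overline{T}$.
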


\begin{proof}
We begin by recalling that
$E\bigr(\overline{G_{\ant}}\bigr)=E\bigr(\overline{G_{\aff}\cap
  G_{\ant}}\bigr)$ (see Remark \ref{rem:idemGant}).
Let $e\in E(M)$ be a central idempotent. It follows from  Theorem
\ref{thm:eM} that $G(eM)=eG$, and that $(eG)_{\aff}\cap
(eG)_{\ant}=e(G_{\aff}\cap G_{\ant})$. Since  $\ell_e|_{\overline{G_{\aff}\cap
eG_{\ant}}}:\overline{G_{\aff}\cap
G_{\ant}} \to \overline{e(G_{\aff}\cap
G_{\ant})}$
is a surjective morphism of algebraic monoids, it follows from
\cite[Corollary 3.9]{Re05} that
\[
E\bigl(\overline{(eG)_{\aff}\cap
(eG)_{\ant}}\bigr)= eE\bigl(\overline{G_{\aff}\cap
G_{\ant}}\bigr),
\]

Since $e\in eE\bigl(\overline{G_{\aff}\cap
G_{\ant}}\bigr)$, it follows that $eM$ is stable if and only if $ef=e$
for all $f\in E\bigl(\overline{G_{\aff}\cap
G_{\ant}}\bigr)$.

Let now $G_{\aff}\cap G_{\ant}=TU$ be a Levi decomposition. Since
$G_{\aff}\cap G_{\ant}$ is
commutative, it follows that
$E\bigl(\overline{G_{\aff}\cap
G_{\ant}}\bigr) =E\bigl(\overline{T}\bigr)$. Indeed, if follows from
\cite[Prop.~3.13]{Re05} that  $E\bigl(\overline{G_{\aff}\cap
G_{\ant}}\bigr)  =\bigcup_{g\in G_{\aff}\cap
G_{\ant}} gE(\overline{T})g^{-1}=E(\overline{T})$.
Let $f_0\in \overline{T}$ the unique
minimal idempotent; i.e.\ $f_0$ is unique idempotent in the Kernel of
the affine toric variety $\overline{T}$ --- the existence of $f_0$
follows from the fact that there exists a finite number of idempotents
elements on $\overline{T}$.   It is clear that $ef_0=e$
if and only if $ef=e$ for all $f\in  E\bigr(\overline{G_{\aff}\cap
  G_{\ant}}\bigr)$.
\end{proof}

\begin{remark}
Observe that, if we keep the notations of the proof of Proposition
\ref{prop:existseMstab}, then $f_0$ is the
unique idempotent in $E\bigl(\overline{G_{\ant}}\bigr)$ such that $f_0M$ is an
stable monoid. Moreover, $f_0$
is the maximum central idempotent for which $f_0M$ is a stable monoid.
\end{remark}

We now characterize normal, anti-affine, algebraic monoids.

\begin{theorem}
\label{thm:TheTheorem}
Let $M$ be a normal algebraic monoid. Then $M$ is anti-affine  if and only
if $eM$ is an anti-affine algebraic group, where $e$ is the unique
minimal idempotent of $\overline{G_{\ant}}$ (equivalently, $e$ is the unique
minimal idempotent of $\overline{G_{\aff}\cap G_{\ant}}$).
In particular, $e$ is the \emph{minimum} idempotent of $M$.
\end{theorem}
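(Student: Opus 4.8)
The plan is to reduce the statement to the already-established machinery about the idempotent $e$ and its retraction $\ell_e \colon M \to eM$. First I would fix $e \in E(\overline{G_{\ant}})$ to be the minimal idempotent, noting by Remark \ref{rem:idemGant} that $E(\overline{G_{\ant}}) = E(\overline{G_{\aff}\cap G_{\ant}})$, so $e$ is also the minimal idempotent of $\overline{G_{\aff}\cap G_{\ant}}$. The key starting point is Corollary \ref{coro:thmcrucial}: since $e \in E(\overline{G_{\ant}\cap G_{\aff}})$, we have $\cO(M) = \cO(eM)$. This already reduces the question ``is $M$ anti-affine?'' to ``is $eM$ anti-affine?''.

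Next I would show that $eM$ is a \emph{stable} monoid. By Proposition \ref{prop:existseMstab}, $eM$ is stable precisely when $ef_0 = e$ for the minimal idempotent $f_0$ of $\overline{G_{\aff}\cap G_{\ant}}$; but $f_0 = e$ by our choice, and $e \cdot e = e$, so $eM$ is indeed stable. Now I apply Corollary \ref{coro:stableantiaff}: a stable anti-affine monoid is automatically an anti-affine algebraic group. Combining these, $M$ is anti-affine $\iff$ $\cO(eM) = \bk$ $\iff$ $eM$ is anti-affine $\iff$ (using stability of $eM$) $eM$ is an anti-affine algebraic group. That gives the main equivalence.

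For the final assertion that $e$ is the \emph{minimum} idempotent of $M$: whenever $M$ is anti-affine, I would argue that since $eM$ is then an algebraic group, its only idempotent is its identity $e$; and the general structure $M = G_{\ant}*_{G_{\aff}\cap G_{\ant}}M_{\aff}$ together with $E(N) \subset N_{\aff}$ forces every idempotent of $M$ to lie in $M_{\aff}$, on which $e$ acts by the retraction $\ell_e$. Concretely, for any $f \in E(M)$ one has $ef \in E(eM)$, and since $eM$ is a group $ef = e$; thus $e = ef$ lies below $f$ in the idempotent order, so $e$ is the minimum idempotent. (In the general, not-necessarily-anti-affine case, the statement that $e$ is the minimum idempotent of $M$ should be read as following from $e$ being the minimal idempotent of $\overline{G_{\aff}\cap G_{\ant}}$ together with the fact, from \cite[Cor.~2.4]{ritbr} and Proposition \ref{prop:resultsM}, that $E(M) = E(M_{\aff})$ and every such idempotent dominates $e$ via $\ell_e$.)

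The main obstacle I anticipate is the last point --- verifying cleanly that $e$ really is the minimum idempotent of all of $M$, not merely of $\overline{G_{\aff}\cap G_{\ant}}$. This requires controlling $E(M)$: one uses that $E(M) \subset M_{\aff}$, then that within the affine monoid $M_{\aff}$ every idempotent is conjugate into (the closure of) a maximal torus containing a maximal torus of $G_{\aff}\cap G_{\ant}$, and that $e$ being minimal in $\overline{G_{\aff}\cap G_{\ant}}$ makes it absorb all such idempotents under multiplication. The stability of $eM$ (equivalently, $G_{\aff}\cap G_{\ant}$ closed in $M$, by Remark after Definition \ref{def:stable}) is what makes this comparison work, so I would lean on Proposition \ref{prop:existseMstab} and its proof rather than redoing the toric combinatorics.
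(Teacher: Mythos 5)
Your argument is correct and follows the paper's own proof step for step: Remark \ref{rem:idemGant} and Corollary \ref{coro:thmcrucial} to get $\cO(M)=\cO(eM)$, Proposition \ref{prop:existseMstab} to see $eM$ is stable, Corollary \ref{coro:stableantiaff} for the equivalence, and then $E(eM)=\{e\}$ to get $ef=e$ for all $f\in E(M)$. The only caveat is your final parenthetical: in the general (non-anti-affine) case $e$ need \emph{not} be the minimum idempotent of $M$ (see Example (1) following the theorem, where $0_N\notin E\bigl(\overline{G_{\aff}\cap G_{\ant}}\bigr)$ yet is the minimum idempotent), so that remark should be dropped --- the ``in particular'' clause is asserted, and proved by your main argument, only when $M$ is anti-affine.
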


\begin{proof}
Firs of of, we recall that $E\bigl(\overline{G_{\aff}\cap
  G_{\ant}}\bigr)=E\bigl(\overline{G_{\ant}}\bigr)$ (see Remark
\ref{rem:idemGant}) and that  $\cO(M)=\cO(eM)$ (see Corollary
\ref{coro:thmcrucial}). Since by Proposition
\ref{prop:existseMstab} $eM$ is an stable algebraic monoid, if follows
from Corollary \ref{coro:stableantiaff} that $\cO(eM)=\Bbbk$ if and
only if $eM$ is an anti-affine algebraic group.

Finally, since $eM$ is a group, it follows that $E(eM)=e$, and hence
$ef=e$ for $f\in E(M)$.
\end{proof}

The following examples indicate why, in Theorem
\ref{thm:TheTheorem}, one must focus on the
idempotents of $E\bigl(\overline{G_{\aff}\cap
G_{\ant}}\bigr)$, rather than just any central idempotent.

\begin{example}
(1)  Let $N$ be an affine monoid  of dimension
$\dim N\geq  1$ with zero element $0_N$.  Then $0_N$ is a central idempotent of $N$.
Let $H$ be any anti-affine algebraic group. Then $M=N\times
H$ is such that $\cO(M)=\cO(N)$, while $0_NM=H$ is an
anti-affine algebraic group. Observe that $0_N$ is the minimum idempotent
of $M$.

Here, $M_{\aff}=N\times H_{\aff}$,  $G(M)_{\aff}=G(N)\times H_{\aff}$ and
$G(M)_{\ant}= \{1\}\times H_{\ant}$. Hence $G(M)_{\aff}\cap
G(M)_{\ant}=\{1\}\times
H_{\aff}$, and thus $0\notin E\bigl(\overline{G(M)_{\aff}\cap
G(M)_{\ant}}\bigr)$.

\noindent (2) Let $N$ be an irreducible affine  monoid, with
zero element $0_ N$, such that $0_N\notin
  \overline{\mathcal Z(G)}$ (take for example $N$ as in example
\ref{exam:center}). Let $M$ be  an algebraic monoid such that
$M_{\aff}=N$.
Then, $0_NM=0_NG(M)_{\ant}\cong
  G(M)_{\ant}/\bigl(G(M)_{\ant}\bigr)_{0_N}$  is an anti-affine
  algebraic group (see  for example \cite[Lemma
  1.3]{kn:brionantiaff}), whereas
  $\cO(M)=\cO(N)^{G(N)\cap G(M)_{\ant}}$ is not
  necessarily equal to the field $\Bbbk$.
\end{example}

We now show that if $M$ is anti-affine, then
$\ell_e:M\to eM$ is Serre's universal morphism from  the
pointed
variety $(M,e)$ into a commutative algebraic group.  See
\cite[Thm.~8]{Se58} and  \cite[\S
2.4]{kn:brionantiaff} for some basic properties of this morphism.

\begin{theorem}
\label{thm:genalb}
Let $M$ be a normal anti-affine algebraic monoid, and let
$e\in E(M)$ be its minimum idempotent. Then $\ell_e:M\to eM$ is
Serre's universal morphism from the pointed variety $(M,e)$ into a
commutative algebraic group. In particular, Serre's morphism fits into
the following
short exact sequence of algebraic monoids.
\begin{center}
\mbox{
\xymatrix{
1\ar@{->}[r]&M_e\ar@{->}[r]& M\ar@{->}[r]& eM\ar@{->}[r]& 1
}
}
\end{center}
\end{theorem}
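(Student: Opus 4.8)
The plan is to verify the universal property of Serre's morphism directly, using the explicit structure of $M$ as an anti-affine monoid. Recall from Theorem \ref{thm:TheTheorem} that when $M$ is anti-affine, $eM$ is an anti-affine algebraic \emph{group} and $e$ is the minimum idempotent of $M$; moreover $M_e=\ell_e^{-1}(1)$ is a normal irreducible affine algebraic monoid by Theorem \ref{thm:eM}. So the first step is to record the short exact sequence $1\to M_e\to M\to eM\to 1$ of algebraic monoids: surjectivity of $\ell_e$ and the identification of the kernel are already in Theorem \ref{thm:eM}, so this step is essentially a citation.

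Next I would establish the universal property: given any morphism $f:M\to B$ into a commutative algebraic group with $f(e)=0_B$, I must produce a unique homomorphism (of algebraic groups, hence of monoids) $\bar f:eM\to B$ with $\bar f\circ\ell_e=f$. The natural candidate is $\bar f(em):=f(m)-f(e)=f(m)$, but well-definedness is the crux: I need $f$ to be constant on the fibres of $\ell_e$, i.e. $f(M_e)=\{0_B\}$ and more generally $f(m_1)=f(m_2)$ whenever $em_1=em_2$. For this I would argue that $M_e$ is an affine monoid mapping to the \emph{affine} commutative group $B$ — wait, $B$ need not be affine, so instead use: the restriction $f|_{M_e}:M_e\to B$ is a monoid morphism from an affine monoid, its image generates a connected commutative subgroup, and composing with the affinization $B\to B/B_{\ant}$ shows the image lands in $B_{\ant}$ up to the affine part; then $f(e)=0_B$ forces the unipotent/toric contribution to vanish on the idempotent and hence, by connectedness of $M_e$ and the fact that $M_e=\overline{G_e}$ with $G_e$ connected, $f|_{M_e}$ is trivial. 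A cleaner route: $\ell_e\circ\ell_e=\ell_e$, so $M=M_e\cdot eM$ with $eM$ a group, and any $m\in M$ satisfies $m=m'\cdot(em)$ for suitable $m'\in M_e$ — actually the decomposition $M\cong G_{\ant}*_{H}M_{\aff}$ lets me write things very explicitly, and since $\cO(M)=\Bbbk$ the only obstruction to $f$ factoring is the affine monoid $M_e$, on which $f$ must be constant because $f$ pulls back $\cO(B)$ into $\cO(M)=\Bbbk$ when $B$ is affine, and into $\cO(M)\otimes(\text{stuff})$ controlled by the abelian variety quotient otherwise.

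Then uniqueness of $\bar f$ is immediate from surjectivity of $\ell_e$. Finally I would invoke Serre \cite[Thm.~8]{Se58}: Serre's universal morphism from the pointed variety $(M,e)$ into commutative algebraic groups exists and is unique, so to identify it with $\ell_e$ it suffices to check that $\ell_e$ \emph{is} such a universal morphism, which is exactly the universal property just verified (note $eM$ is commutative, being anti-affine). I would also remark that $\ell_e$ respects base points, $\ell_e(e)=e^2=e=1_{eM}$.

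The main obstacle is the well-definedness step: showing every morphism $f:M\to B$ to a commutative group with $f(e)=0_B$ kills the affine submonoid $M_e$. The key input is that $M_e$ is affine and anti-affine-ness of $M$ severely restricts regular functions; concretely, I expect to factor $B$ through its own Rosenlicht/affinization sequence $B_{\ant}\hookrightarrow B\twoheadrightarrow B/B_{\ant}$, handle the affine quotient $B/B_{\ant}$ using $\cO(M)=\Bbbk$ (so $M\to B/B_{\ant}$ is constant, equal to $0$ since it must send $e$ to $0$), and then handle the anti-affine part $B_{\ant}$ using that $M_e$ has a fixed point — namely any idempotent in its kernel maps to an idempotent of $B_{\ant}$, which is $0_B$, and a connected monoid morphism trivial on the kernel of the affine toric part is trivial. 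This bookkeeping, while not deep, is where the real content of the proof lies.
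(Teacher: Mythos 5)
Your strategy---verifying Serre's universal property for $\ell_e$ directly---is genuinely different from the paper's. The paper instead starts from the \emph{existence} of Serre's universal morphism $\sigma:M\to S$, obtains $\varphi:S\to eM$ with $\varphi\circ\sigma=\ell_e$ by universality, observes that $\sigma|_{eM}$ splits the resulting extension $0\to N\to S\to eM\to 0$, and then kills $N$ by a dimension count on Albanese varieties together with $\Bbbk=\cO(S)\cong\cO(eM)\otimes\cO(N)$. Your route founders exactly at the step you yourself identify as the crux. Serre's universal property quantifies over \emph{all} morphisms of pointed varieties $f:(M,e)\to(B,0_B)$ into commutative algebraic groups, not only over monoid homomorphisms; yet your argument that $f(M_e)=\{0_B\}$ repeatedly uses multiplicativity of $f|_{M_e}$ (``its image generates a connected commutative subgroup'', ``a connected monoid morphism trivial on the kernel of the affine toric part is trivial''). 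None of this is available for a bare morphism of varieties. The reduction to $B_{\ant}$ is fine ($M\to B/B_{\ant}$ is constant because $B/B_{\ant}$ is affine and $\cO(M)=\Bbbk$), but a morphism of varieties from the affine variety $M_e$ to an anti-affine group need not be constant --- for instance $\mathbb{G}_a\cong\mathbb{A}^1$ embeds in the universal vector extension of an abelian variety in characteristic $0$ --- so anti-affineness of $B_{\ant}$ alone does not force $f|_{M_e}$ to vanish.

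There is a second, smaller gap: even granting $f(M_e)=\{0_B\}$, well-definedness of $\bar f(em):=f(m)$ requires $f$ to be constant on \emph{every} fibre of $\ell_e$. Over $eg$ with $g\in G$ the fibre is the translate $gM_e$, but over a point $x\in eM\setminus eG$ the fibre $\ell_e^{-1}(x)$ is not in any evident way a translate of $M_e$ by a single element of $M$ (multiplying by the inverse of $x$ in the group $eM$ maps the fibre \emph{into} $M_e$ but gives no way back), so constancy on $M_e$ does not automatically propagate; one would have to argue on the dense locus and extend using normality. Both gaps might be closable, but as written the proposal does not close them, and the paper's detour through the abstract universal object $S$ is precisely what renders these pointwise verifications unnecessary.
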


\begin{proof}
Let $\sigma:M\to S$ be Serre's universal morphism from the pointed
variety $(M,e)$ into a
commutative algebraic group.
Since $M$ is anti-affine, it follows that $S$ is necessarily an
anti-affine algebraic group (see for example \cite[\S
2.4]{kn:brionantiaff}).  Moreover, it follows
from  Theorem
\ref{thm:TheTheorem} that $eM$ is an anti-affine algebraic group.
 Thus we have a commutative diagram
\begin{center}
\mbox{
\xymatrix{
M\ar@{->}[r]^-{\ell_e}\ar@{->}[d]_-{\sigma}& eM\\
S\ar@{->}[ur]_-{\varphi}&
}
}
\end{center}
\noindent Since $\varphi(0_S)=e$, it follows that $\varphi$ is a morphism of
algebraic groups (see for example
\cite[Lem.1.5]{kn:brionantiaff}). Consider the associated short exact
sequence
\begin{equation}
\label{eqn:seq}
\hfil
\xymatrix{
0\ar@{->}[r]& N\ar@{->}[r]& S\ar@{->}[r]^-{\varphi}& eM\ar@{->}[r]& 0
}
\hfill
 \end{equation}

\noindent Since $eM$ is anti-affine with
$\sigma(e)=0_S$, it follows that $\sigma|_{_{eM}}:eM\to S$ is a
morphism of algebraic groups. Moreover, we have the following
commutative diagram
\begin{center}
\mbox{
\xymatrix{
 eM\ar@{->}[r]\ar@{->}[d]_-{\alpha_{eM}}&
 M\ar@{->>}[r]^-{\sigma}\ar@{->}[d]^-{\alpha_M}&
S\ar@{->}[d]^-{\alpha_S}\\
 \mathcal A(eM)=\mathcal A(G)\ar@{->}[r]& \mathcal A(M)=\mathcal
 A(G)\ar@{->>}[r]_-{\alpha_\sigma}&
\mathcal A(S)
}
}
 \end{center}
\noindent Let $\gamma :\mathcal A(eM)\to \mathcal A(S)$ be the composition of the
horizontal arrows. Then $\gamma$ is a surjective morphism of algebraic
groups. In particular,  $\dim \mathcal A(S)\leq \dim \mathcal
 A(eM)$.

On the other hand, $\varphi\circ\sigma=\ell_e : M\to eM$,
and thus   $\sigma|_{_{eM}}$ is a splitting for the exact
sequence \eqref{eqn:seq}. It follows that $S\cong eM\times N$, and we
have the
following commutative diagram
\begin{center}
\mbox{
\xymatrix{
0\ar@{->}[r]& eM\ar@{->}[r]\ar@{->}[d]_-{\alpha_{eM}}& S=eM\times
N\ar@{->}[r]\ar@{->}[d]^-{\alpha_S}&
N\ar@{->}[r]\ar@{->}[d]^-{\alpha_N}&
0\\
& \mathcal A(eM)\ar@{->}[r]_-{\gamma}& \mathcal A(S)\ar@{->}[r]_{\beta}&
\mathcal A(N)\ar@{->}[r]& 0
}
}
 \end{center}

 It follows that $\gamma\bigl(\mathcal A(eM)\bigr)\subset
 \beta^{-1}(0)$. Hence,
 \[
\dim \mathcal A(S)\leq \dim \mathcal
 A(eM)\leq \dim \mathcal A(S)-\dim \mathcal A(N).
\]
where the second inequality follows from Chevalley's theorem on the
dimension of the
 fibers of a morphism. Thus, equality holds and $\dim \mathcal
 A(N)=0$. It follows that $N$
is an irreducible affine algebraic group. Since $\Bbbk =\cO(S)\cong
\cO(eM)\otimes \cO(N)\cong \cO(N)$, it follows that $N$ is a point, and
thus $S\cong eM$.
\end{proof}

We conclude this paper by extending the Rosenlicht decomposition of
algebraic groups (\cite[Prop.~3.1]{kn:brionantiaff}), to the setting
of algebraic monoids.

\begin{theorem}
\label{thm:rosenmon}
Let $M$ be a normal algebraic monoid, with unit group $G$, and let
$e\in E\bigl(\overline{G_{\ant}}\bigr)$ be the
minimum idempotent of $\overline{G_{\ant}}$.  Assume
that $\cO(M)$ is finitely generated. Then $M_eG_{\ant}$ is an
anti-affine algebraic monoid, and  the sequence
\begin{center}
\mbox{
\xymatrix{
1\ar@{->}[r]& M_eG_{\ant}\ar@{->}[r]& M\ar@{->}[r]_-{\varphi}&
\Spec\bigl(\cO(M)\bigr)
}
}
 \end{center}
is an exact sequence of algebraic monoids, with $\varphi$ a dominant
morphism.
\end{theorem}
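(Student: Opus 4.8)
The plan is to build the decomposition $M = M_eG_{\ant}$ together with an explicit identification of the affine quotient, then recognize the fibre as anti-affine by appealing to Theorem~\ref{thm:TheTheorem}. First I would record the basic set-up: since $e\in E\bigl(\overline{G_{\ant}}\bigr)=E\bigl(\overline{G_{\aff}\cap G_{\ant}}\bigr)$ is the minimum idempotent, Corollary~\ref{coro:thmcrucial} gives $\cO(M)=\cO(eM)$, so the canonical map $\varphi:M\to\Spec\bigl(\cO(M)\bigr)$ factors through $\ell_e:M\to eM$; moreover, since $\cO(M)$ is finitely generated by hypothesis, $\Spec\bigl(\cO(M)\bigr)=\Spec\bigl(\cO(eM)\bigr)$ is exactly the affinization of the monoid $eM$. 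By Proposition~\ref{prop:existseMstab}, $eM$ is a \emph{stable} monoid (as $ef_0=e$ for the minimum idempotent $f_0$, which here equals $e$), so the affinization $eM\to\Spec\bigl(\cO(eM)\bigr)$ is the natural map whose fibre over $1$ I want to control.

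Next I would identify the fibre $\varphi^{-1}(1)$. The key point is that $\ell_e:M\to eM$ has fibre over $1$ equal to $M_e=\{m\in M : me=e\}$ by Theorem~\ref{thm:eM}(2), and then one must understand the affinization of $eM$ itself. I would apply the Rosenlicht decomposition to $eM$: by Theorem~\ref{thm:eM}(3)--(4) we have $eM = G_{\ant}\,eM_{\aff}\cong eG_{\ant}*_{eG_{\aff}\cap eG_{\ant}}eM_{\aff}$, with $(eG)_{\ant}=eG_{\ant}$. Since $eM$ is stable, Theorem~\ref{thm:crucial} and the observability argument of Theorem~\ref{thm:stablefracfield} show $\bigl[\cO(eM)\bigr]=\bigl[\cO(eG)\bigr]$, but more is true: $\cO(eM)=\cO(eM_{\aff})^{eG_{\aff}\cap eG_{\ant}}$ and the affinization fibre over $1$ is the locus where all these invariants take their value at the identity coset. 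I would argue that this fibre is precisely $M_eG_{\ant}$: elements of $G_{\ant}$ and of $M_e$ both lie in the fibre (the former because $\cO(eM)$-functions come from $eG_{\ant}$-invariants, which $G_{\ant}$-translation fixes; the latter because $M_e=\ell_e^{-1}(1)$ maps into the fibre over $1_{eM}$, and $1_{eM}=e$ maps to the base point of the affinization), and conversely any fibre element decomposes via $M=G_{\ant}M_{\aff}$ into a $G_{\ant}$-part times an $M_{\aff}$-part lying in $M_e$, using that $M_e=\overline{G_e}$ with $G_e\subset G_{\aff}$ and that $\cO(M_{\aff})^{G_{\aff}\cap G_{\ant}}$-functions separate exactly the $(G_{\aff}\cap G_{\ant})$-orbit closures. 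Once $\varphi^{-1}(1)=M_eG_{\ant}$ is established, I note it is a closed submonoid of $M$ (it is the preimage of a point under a monoid morphism), and $\varphi$ is dominant since $\cO(M)\hookrightarrow\cO(M)$ is the identity on global functions.

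Finally, to see that $M_eG_{\ant}$ is anti-affine, I would compute its ring of regular functions. The submonoid $N:=M_eG_{\ant}$ has unit group $G_eG_{\ant}$ (with $G_e\subset G_{\aff}$), so $N_{\aff}=\overline{(G_eG_{\ant})_{\aff}}=\overline{G_e\cdot(G_{\ant})_{\aff}}$, and $(G_eG_{\ant})_{\aff}\cap (G_eG_{\ant})_{\ant}$ contains $G_e$; the point is that $\cO(N)=\cO(N_{\aff})^{N_{\aff}\cap N_{\ant}}$ by Theorem~\ref{thm:crucial}, and I would show this invariant ring is $\bk$. Alternatively — and this is cleaner — I would invoke Theorem~\ref{thm:TheTheorem} directly: a normal monoid is anti-affine iff $e'M'$ is an anti-affine group for $e'$ the minimum idempotent of $\overline{(M')_{\aff,\ant}\cap\cdots}$, and for $M'=N$ one checks the relevant minimum idempotent is again $e$, while $eN=eM_eG_{\ant}=eG_{\ant}$ is exactly the anti-affine group $G_{\ant}/(G_{\aff}\cap G_{\ant})$ identified in the Corollary following Proposition~\ref{prop:resultsM}. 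The main obstacle I anticipate is the precise identification of the scheme-theoretic fibre $\varphi^{-1}(1)$ with the set-theoretic product $M_eG_{\ant}$: one must be careful that the affinization of the \emph{non-affine} monoid $eM$ really is computed by the $eG_{\aff}\cap eG_{\ant}$-invariants on $eM_{\aff}$ and that the fibre over $1$ of $eM_{\aff}\to\Spec$ of those invariants is $M_e\cap eM_{\aff}$ together with the $G_{\ant}$-orbit — this requires combining the geometric quotient description $eM\cong G_{\ant}*_{G_{\aff}\cap G_{\ant}}eM_{\aff}$ with the stability of $eM$ (so that $G_{\aff}\cap G_{\ant}$-orbits in $eM_{\aff}$ are closed, hence separated by invariants) in a way that pins down the fibre exactly, not just up to closure.
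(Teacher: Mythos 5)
Your proposal follows essentially the same route as the paper: reduce to the stable monoid $eM$ via $\cO(M)=\cO(eM)$ and Proposition \ref{prop:existseMstab}, identify $\varphi^{-1}(1)$ as $M_eG_{\ant}$ using the Rosenlicht decomposition and the observability of $G_{\aff}\cap G_{\ant}$ in $M_{\aff}$, and conclude that $M_eG_{\ant}$ is anti-affine by applying Theorem \ref{thm:TheTheorem} with the same minimal idempotent $e$ and the identity $e(M_eG_{\ant})=eG_{\ant}$. The one step you flag as the anticipated obstacle --- pinning the fibre down exactly rather than only up to closure (your phrase ``invariants separate exactly the orbit closures'' is not literally true everywhere) --- is precisely where the paper invokes the affinized-quotient theorem for observable actions (\cite[Thm.~3.18]{kn:oaoag}), which provides an open subset of closed-orbit fibres that is then moved onto the fibre over $1_N$ by $(G\times G)$-equivariance.
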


\begin{proof}
 Since $\cO(M)$ is finitely
generated, it follows that $N=\Spec\bigl(\cO(M)\bigr)$ is an algebraic
monoid. Moreover, the canonical morphism  $\varphi: M\to N$ is a
morphism of algebraic monoids. If we let $S=\varphi^{-1}(1)$, then
we have the following commutative diagram. The top row is exact and the bottom row
is left exact.
\begin{center}
\mbox{
\xymatrix{
1\ar@{->}[r]& G_{\ant}\ar@{->}[r]\ar@{->}[d]&
G\ar@{->}[r]\ar@{^(->}[d]& \Spec\bigl(\cO(G)\bigr)\ar@{->}[d]\ar@{->}[r]&1\\
1\ar@{->}[r]& S\ar@{->}[r]& M
\ar@{->}[r]_-{\varphi}&N=\Spec\bigl(\cO(M)\bigr)
}
}
 \end{center}
Since $M=G_{\ant}M_{\aff}\cong G_{\ant}*_{G_{\aff}\cap
  G_{\ant}}M_{\aff}$ (Proposition \ref{prop:resultsM}) and
$\cO(M)\cong \cO(M_{\aff})^{G_{\aff}\cap
  G_{\ant}}$ (Theorem \ref{thm:crucial}),  it follows that the
restriction morphism $\varphi|_{M_{\aff}}$
is induced by the inclusion $\cO(M_{\aff})^{G_{\aff}\cap
G_{\ant}}\subset \cO(M_{\aff})$. In particular,
$\varphi|_{M_{\aff}}$ is dominant and the following diagram is
commutative
\begin{center}
\mbox{
\xymatrix{
1\ar@{->}[r]& S\ar@{->}[r]& M
\ar@{->}[r]^-{\varphi}&N=\Spec\bigl(\cO(M)\bigr)  \\
& G_{\aff}\cap G_{\ant}\ar@{^(->}[r]\ar@{^(->}[u] &
M_{\aff}\ar@{->}[r]\ar@{^(->}[u]&
N=\Spec\bigl(\cO(M_{\aff})^{G_{\aff}\cap G_{\ant}}\bigr)\ar@{=}[u]
}
}
 \end{center}
Assume that $M$ is a stable algebraic monoid. Then $G_{\aff}\cap
G_{\ant}$ is a commutative closed normal subgroup of $M_{\aff}$, and
hence it follows from Proposition \ref{prop:partialconv} that $
G_{\aff}\cap G_{\ant}$ is observable in $M_{\aff}$.  It follows from
\cite[Thm.3.18]{kn:oaoag} that the
\emph{affinized quotient} $\varphi|_{M_{\aff}}: M_{\aff}\to
\Spec\bigl(\cO(M_{\aff})^{G_{\aff}\cap
G_{\ant}} \bigr)=N$ is such that there exists an open subset $U\subset
N$ so that $(\varphi|_{M_{\aff}})^{-1}(u)$ is a closed $G_{\aff}\cap
G_{\ant}$-orbit for all
$u\in U$. We claim that this implies that
$(\varphi|_{M_{\aff}})^{-1}(1_N)=G_{\aff}\cap  G_{\ant}$. Indeed,
observe that since $G_{\aff}\cap
G_{\ant}$ is normal in $M$, it follows that  $\varphi|_{M_{\aff}}$ is
$(G\times
G)$-equivariant. It suffices now to recall that $M=G_{\ant}M_{\aff}$,
and thus
\[
\varphi^{-1}(1_N)=G_{\ant}(\varphi|_{M_{\aff}})^{-1}(1_N)=
G_{\ant}.
\]
Since $M$ is stable, $e=1$, and thus $M_eG_{\ant}=G_{\ant}$.

If $M$ is not a stable monoid, let $e\in
E\bigl(\overline{G_{\ant}}\bigr)$ be the minimum
idempotent. Then by Corollary \ref{coro:thmcrucial} it follows that
$\cO(M)=\cO(eM)$. One then concludes from (the proof of)
Corollary \ref{coro:thmcrucial}, that $\varphi|_{eM}:eM\to N\cong
\Spec\bigl(\cO(eM)\bigr)$  is the affinization morphism of
$eM$. Recalling that $G(eM)_{\ant}=eG_{\ant}$, it follows
that $eM$ is stable that we have an exact sequence
  \begin{center}
\mbox{
\xymatrix{
1\ar@{->}[r]& eG_{\ant}\ar@{->}[r]& eM
\ar@{->}[r]_-{\varphi}&\Spec\bigl(\cO(eM)\bigr)
}
}
 \end{center}
that fits into the following commutative diagram of algebraic monoids, where
the vertical sequence in the center is exact.
  \begin{center}
\mbox{
\xymatrix{
&                                    & 1\ar@{->}[d]             &   \\
&                                    & M_e  \ar@{->}[d]\ar@{^(->}[dl]   & \\
1\ar@{->}[r]& S\ar@{->}[r]\ar@{->}[d]^-{\ell_e}&
M\ar@{->}[r]^-{\varphi}\ar@{->}[d]^-{\ell_e}&
\Spec\bigl(\cO(M)\bigr)\ar@{=}[d]\\
1\ar@{->}[r]& eG_{\ant}\ar@{->}[r]& eM
\ar@{->}[r]_-{\varphi}\ar@{->}[d]&\Spec\bigl(\cO(eM)\bigr)\\
& & 0&
}
}
 \end{center}
 It follows that
$S=\ell_e^{-1}(eG_{\ant})=M_eG_{\ant}$.

To complete the proof we observe that $G(S)=G_eG_{\ant}$, with
$G_e\subset G_{\aff}$. Hence
$ G(S)_{\ant}=G_{\ant}$. Since $e$ is the minimum idempotent of the
closure $\overline{G_{\ant}}\subset M$, it follows that $e$ is also
the minimum idempotent of the closure $\overline{G_{\ant}}\subset
M_eG_{\ant}$. It suffices now to observe that
$e(M_eG_{\ant})=eG_{\ant}$, and then apply Theorem \ref{thm:TheTheorem}.
\end{proof}

\bigskip

\bigskip

\begin{scriptsize}
\noindent \begin{tabular}{ll}
{\sc Lex Renner}   \vspace*{2pt} &          {\sc Alvaro Rittatore} \\
University of Western Ontario \hspace*{3.5cm} &    Facultad de Ciencias\\
London, N6A 5B7,  Canada        &  Universidad de la Rep\'ublica\\
{\tt lex@uwo.ca}               &  Igu\'a 4225\\
                &      11400 Montevideo, Uruguay\\
 & {\tt alvaro@cmat.edu.uy}
\end{tabular}
\end{scriptsize}

\end{document}